\documentclass[11pt]{article}

\usepackage{float}
\usepackage[utf8]{inputenc}
\usepackage[T1]{fontenc}
\usepackage{amsmath,amssymb}
\usepackage{graphicx}
\usepackage{float}
\numberwithin{equation}{section}
\usepackage[hyperref,amsmath,thmmarks]{ntheorem}
\usepackage{aliascnt}
\usepackage[a4paper,centering,bindingoffset=0cm,marginpar=2cm,margin=2.5cm]{geometry}
\usepackage[pagestyles]{titlesec}
\usepackage[font=footnotesize,format=plain,labelfont=sc,textfont=sl,width=0.75\textwidth,labelsep=period]{caption}

\usepackage{subfig}

\usepackage[backend=biber,maxnames=10,backref=true,hyperref=true,giveninits=true,safeinputenc]{biblatex}
\addbibresource{biber.bib}

\DefineBibliographyStrings{english}{%
	backrefpage = {cited on page},
	backrefpages = {cited on pages},
}

\title{Quadratic neural networks for solving inverse problems}
\author{Leon Frischauf$^1$\\{\footnotesize\href{mailto:leon.frischauf@univie.ac.at}{leon.frischauf@univie.ac.at}}
	\and Otmar Scherzer$^{1,2,3}$\\{\footnotesize\href{mailto:otmar.scherzer@univie.ac.at}{otmar.scherzer@univie.ac.at}}
	\and Cong Shi$^{1,2,*}$\\{\footnotesize\href{mailto:cong.shi@univie.ac.at}{cong.shi@univie.ac.at}}}
\date{}

\DeclareFieldFormat[report]{title}{``#1''}
\DeclareFieldFormat[book]{title}{``#1''}
\AtEveryBibitem{\clearfield{url}}
\AtEveryBibitem{\clearfield{note}}

\titleformat{\section}[block]{\large\sc\filcenter}{\thesection.}{0.5ex}{}[]
\titleformat{\subsection}[runin]{\bf}{\thesubsection.}{0.5ex}{}[.]

\usepackage[pdftex,colorlinks=true,linkcolor=blue,citecolor=green,urlcolor=blue,bookmarks=true,bookmarksnumbered=true]{hyperref}
\hypersetup
{
	pdfauthor={Authors},
	pdfsubject={Subject},
	pdftitle={Title},
	pdfkeywords={Keywords}
}

\newpagestyle{headers}
{
	\headrule
	\sethead[\footnotesize\thepage][\footnotesize\sc Authors][]{}{\footnotesize\sc Quadratic neural networks}{\footnotesize\thepage}
	\setfoot{}{}{}
}
\pagestyle{headers}

\postdisplaypenalty= 1000
\widowpenalty = 1000
\clubpenalty = 1000
\displaywidowpenalty = 1000
\setlength{\parindent}{0pt}
\setlength{\parskip}{1ex}


\newtheorem{lemma}{Lemma}[section]

\newaliascnt{proposition}{lemma}

\aliascntresetthe{proposition}

\newaliascnt{corollary}{lemma}
\newtheorem{corollary}[corollary]{Corollary}
\aliascntresetthe{corollary}

\newaliascnt{theorem}{lemma}
\newtheorem{theorem}[theorem]{Theorem}
\aliascntresetthe{theorem}

\newaliascnt{definition}{lemma}
\newtheorem{definition}[definition]{Definition}
\aliascntresetthe{definition}

\newaliascnt{example}{lemma}
\newtheorem{example}[example]{Example}
\aliascntresetthe{example}

\newaliascnt{assumption}{lemma}
\newtheorem{assumption}[assumption]{Assumption}
\aliascntresetthe{assumption}

\newaliascnt{notation}{lemma}

\aliascntresetthe{notation}

\newtheorem{remark}{Remark}

\theoremsymbol{\ensuremath{\square}}
\newtheorem*{proof}{Proof}

\newcommand{\N}{\mathbb{N}}
\newcommand{\Z}{\mathbb{Z}}
\newcommand{\R}{\mathbb{R}}

\newcommand{\A}{F}

\newcommand{\bP}{{\bf P}}

\newcommand{\bx}{{\bf x}}
\newcommand{\by}{{\bf y}}

\newcommand{\vp}{{\vec p}}

\let\RE\Re
\let\Re=\undefined
\DeclareMathOperator{\Re}{\RE e}
\let\IM\Im
\let\Im=\undefined
\DeclareMathOperator{\Im}{\IM m}


\newcommand{\domain}[1]{\mathcal{D}\left(#1\right)}
\newcommand{\abs}[1]{\left|#1\right|}
\newcommand{\norm}[1]{\left\|#1\right\|}
\newcommand{\set}[1]{\left\{ #1\right\}}

\newcommand{\e}{\mathrm e}


\newcommand{\vx}{{\vec{x}}}
\newcommand{\vy}{{\vec{y}}}
\newcommand{\vw}{\vec{w}}
\newcommand{\vz}{\vec{z}}

\newcommand{\vfm}{{\bf{f}}}
\newcommand{\vzm}{{\bf{z}}}

\newcommand{\bw}{{\bf w}}

\newcommand{\interval}{\texttt{I}}
\newcommand{\quader}{\texttt{Q}}

\newcommand{\scinn}[1]{S^C_{#1}}
\newcommand{\wcinn}[1]{\psi^C_{#1}}

\newcommand\blfootnote[1]{%
	\begingroup
	\renewcommand\thefootnote{}\footnote{#1}%
	\addtocounter{footnote}{-1}%
	\endgroup
}



\begin{document}

	\maketitle
	\thispagestyle{empty}
	\begin{center}{
			\hspace*{2em}
			\parbox[t]{10em}{\footnotesize
				\hspace*{-1ex}$^1$Faculty of Mathematics\\
				University of Vienna\\
				Oskar-Morgenstern-Platz 1\\
				A-1090 Vienna, Austria\\\vspace*{1ex}}
			\hfil
			\parbox[t]{12em}{\footnotesize
				\hspace*{-1ex}$^2$Johann Radon Institute for\\
				\hspace*{0.5em}Computational and Applied\\
				\hspace*{0.5em}Mathematics (RICAM)\\
				Altenbergerstraße 69\\
				A-4040 Linz, Austria}
			\hfil
			\parbox[t]{12em}{\footnotesize
				\hspace*{-1ex}$^3$Christian Doppler Laboratory for Mathematical Modeling and\\
				Simulation of Next Generations of Ultrasound Devices (MaMSi)\\
				Oskar-Morgenstern-Platz 1\\
				A-1090 Vienna, Austria\\\vspace*{1ex}}
	}\end{center}
	\blfootnote{${}^*$Cong Shi is the corresponding author.}
	
	\begin{abstract}
	In this paper we investigate the solution of inverse problems with neural network ansatz functions with
	generalized decision functions. The relevant observation for this work is that such functions can approximate typical test cases, such as the Shepp-Logan
		phantom, better, than standard neural networks.
    Moreover, we show that the convergence analysis of numerical methods for solving inverse problems with shallow generalized neural network functions leads to more intuitive convergence conditions, than for deep affine linear neural networks.
	\end{abstract}
\hspace*{5.8ex}\textbf{Keywords:} Inverse problems; generalized neural network functions.\\
\hspace*{5.8ex}\textbf{MSC:} 65J22; 45Q05; 42C40; 65F20
	
\section{Introduction}
We consider an \emph{inverse problem}, which consists in solving an operator equation
\begin{equation}\label{eq:op}
	\A (\bx) = \by_0,
\end{equation}
where $\A: {\bf X}\rightarrow {\bf Y}$ denotes an operator, mapping between function spaces $({\bf X}, \norm{\cdot})$
and $({\bf Y}, \norm{\cdot})$.
For the numerical solution of \autoref{eq:op} an appropriate set of ansatz-functions $\bP$ has to be selected, on which an approximate solution of \autoref{eq:op} is calculated, for instance a function ${\bf p} \in \bP \subseteq {\bf X}$, which solves
\begin{equation}\label{eq:opapp}
	\A ({\bf p}) = \by,
\end{equation}
where $\by$ is an approximation of $\by_0$.
The history on the topic of approximating the solution of \autoref{eq:op} by discretization and regularization is long: For linear inverse problems this has been investigated for instance in \cite{Nat77}. Much later on neural network ansatz functions have been used for solving \autoref{eq:op}; see for instance \cite{ObmSchwHal21}. In this paper we investigate solving \autoref{eq:opapp} when $\bP$ is a set of neural network functions with \emph{generalized} decision functions. In order to spot the difference to classical neural network functions we recall the definition of them first:
\begin{definition}[Affine linear neural network functions] \label{de:affinenns}
Let $m \geq n \in \N$.
	Line vectors in $\R^m$ and $\R^n$ are denoted by
	\begin{equation*}
		\bw = (w^{(1)},w^{(2)},\ldots,w^{(m)})^T \text{ and }
		\vx = (x_1,x_2,\ldots,x_n)^T, \text{ respectively.}
	\end{equation*}
\begin{itemize}
	\item A \emph{shallow} affine linear neural networks ({\bf ALNN}) is a function (here $m=n$)
	 \begin{equation}\label{eq:classical_approximation}
		\vx \in \R^n \to {\bf p}(\vx) := \Psi[\vp](\vx) := \sum_{j=1}^{N} \alpha_j\sigma\left(\bw_j^T \vx +\theta_j \right),
	 \end{equation}
     with $\alpha_j, \theta_j \in \R$, $\bw_j \in \R^n$; $\sigma$ is an activation function, such as the sigmoid, ReLU${}^k$ or Softmax function.
     Moreover,
     \begin{equation} \label{eq:p}
     	\vp = [\alpha_1,\ldots,\alpha_N; \bw_1,\ldots,\bw_N; \theta_1,\ldots,\theta_N] \in \R^{n_*} \text{ with }
     	n_* = (n+2)N
     \end{equation}
     denotes the according parametrization of ${\bf p}$.
     In this context the set of {\bf ALNN}s is given by
     \begin{equation*}
     	\bP := \set{{\bf p} \text{ of the form \autoref{eq:classical_approximation}}: \vp \in \R^{n_*}}.
     \end{equation*}
     \item More recently \emph{deep} affine linear neural network functions ({\bf DNN}s) have become popular: An $(L+2)$-layer network looks as follows:
     \begin{equation}\label{eq:DNN}
     	\begin{aligned}
          \vx \in \R^n \to & {\bf p}(\vx) := \\
           &\Psi[\vp](\vx) := \sum_{j_L=1}^{N_L} \alpha_{j_L,L} \sigma_L\left( p_{j_L,L} \left( \sum_{j_{L-1}=1}^{N_{L-1}} \ldots \left( \sum_{j_1=1}^{N} \alpha_{j_1,1} \sigma_1 \left(p_{j_1,1}(\vx) \right) \right)\right) \right).
        \end{aligned}
     \end{equation}	
where
$$p_{j,l}(\vx) = \bw_{j,l}^T \vx +\theta_{j,l}
\text{ with } \alpha_{j,l}, \theta_{j,l} \in \R \text{ and } \bw_{j,l} \in \R^n \text{ for all } l=1,\ldots,L.$$
Here $s\in \R \to \sigma_k(s)$, $k=1,\ldots,L$, denote activation functions. Moreover, we denote the parametrizing vector by
    \begin{equation} \label{eq:pd}
	\vp = [\alpha_{1,1},\ldots,\alpha_{N_L,L}; \bw_{1,1},\ldots,\bw_{N_L,L}; \theta_{1,1},\ldots,\theta_{N_L,L}] \in \R^{n_*}.
\end{equation}

A shallow linear neural network can also be called a \emph{3-layer network}. The notation 3-layer network is consistent with the literature because input-and output are counted on top. Therefore,
a general $(L+2)$-layer network has only $L$ \emph{internal} layers.
\end{itemize}
\end{definition}
We demonstrate exemplary below (see \autoref{sec:mot}) that the convergence analysis of an iterative Gauss-Newton method for
solving a linear inverse problem restricted to a set of deep neural network ansatz functions is involved (see \autoref{ex:ccdn}), while the analysis with shallow neural networks ansatz functions leads to intuitive conditions (see also \cite{SchHofNas23}). Moreover, it is well-known in regularization theory that the quality of approximation of the solution of \autoref{eq:opapp} is depending on the approximation qualities of the set $\bP$:
Approximation of functions with shallow affine linear neural networks is a classical topic of machine learning and
in approximation theory, see for instance \cite{Pin99,Bar93,Cyb89,HorStiWhi89b,LesLinPinScho93,Mha93,SieXu22,SieXu23}. The central result  concerns the \emph{universal approximation property}. Later on the universal approximation property has been established for different classes of neural networks: Examples are dropout neural networks (see \cite{WagWanLia13,ManPelPorSanSen22}), convolutional neural networks (CNN) (see for example \cite{Zho18, Zho20}), recurrent neural networks (RNN) (see \cite{SchaZim07, Ham00}), networks with random nodes (see \cite{Whi89}), with random weights and biases (see \cite{PaoParSob94,IgePao95}) and with fixed neural network topology (see \cite{GelMaoLi99}), to name but a few.

In this paper we follow up on this topic and analyze neural network functions with decision functions, which are higher order polynomials, as specified now:
\begin{definition}[Shallow generalized neural network function] \label{de:gnn} Let $n,m,N \in \N$. Moreover, let
	$$\vfm_j: \R^n \to \R^m, \quad j=1,\ldots,N$$ be vector-valued functions. Neural network functions associated to $\set{\vfm_j: j=1,\ldots,N}$ are defined by
	\begin{equation}\label{eq:general}
		\vx \to {\bf p}(\vx) = \Psi[\vp](\vx) := \sum_{j=1}^{N} \alpha_j\sigma\left(\bw_j^T\vfm_j(\vx) +\theta_j\right) \text{ with } \alpha_j, \theta_j \in \R \text{ and } \bw_j \in \R^{m}.
	\end{equation}
    Again we denote the parametrization vector, containing all coefficients, with $\vp$ and the set of all such functions with $\bP$.
    We call
    	\begin{equation}\label{eq:GeneralFunctions}
    		\mathcal{D}: = \set{ \vx \to \bw^T \vfm_j(\vx) +\theta: \bw \in \R^{m}, \theta \in \R, j=1,\ldots,N}
    	\end{equation}
    	the set of \emph{decision functions} associated to \autoref{eq:general}.
    	The composition of $\sigma$ with a decision function is called \emph{neuron}.
\end{definition}
We discuss approximation properties of generalized neural networks (see \autoref{th:general}),
in particular neural network functions with decision functions, which are at most \emph{quadratic polynomials} in $\vx$. This idea is not completely new: In \cite{TsaTefNikPit19} neural networks with \emph{parabolic} decision functions have been applied in a number of applications. In \cite{FanXioWan20}, the authors proposed neural networks with radial decision functions, and proved an universal approximation result for ReLU-activated \emph{deep quadratic networks}.

\subsection*{Outline of this paper}
The motivation for this paper is to solve \autoref{eq:opapp} on a set of neural network functions $\bP$
with iterative algorithms, such as for instance a Gauss-Newton method. Good approximation
properties of functions of interest (such as the Shepp-Logan phantom) is provided with quadratic
\emph{decision functions}, which have been suggested already in \cite{Mha93}.
We concentrate on shallow neural networks because the resulting analysis of iterative methods for solving inverse problems is intuitive (see \autoref{sec:mot}). We apply different approximation theorems (such as for instance the universal approximation theorem of \cite{Cyb89}) to cover all situations from \autoref{de:quadratic}, below. Moreover, by constructing wavelet frames based on quadratic decision functions, we give an explicit convergence rate for shallow radial network approximations; see \autoref{sec:rates}, with main result \autoref{thm:ConvR}. In \autoref{sec:appendix} the important conditions of the \emph{approximation to the identity} (AtI) from \cite{DenHan09} used for proving the convergence rates in \autoref{sec:rates} are verified.

   \section{Examples of networks with generalized decision functions} \label{sex:examples}
    The versatility  of the network defined in \autoref{eq:general} is due to the flexibility of the functions $\vfm_j$, $j=1,\ldots,N$ to chose.

    We give a few examples.
    \begin{definition}[Neural networks with generalized decision functions] \label{de:quadratic}
    	We split them into three categories:
    	\begin{description}
    		\item{General quadratic neural networks {\bf (GQNN)}}: Let
    $$m = n+1, \quad \vfm_j(\vx) = \begin{pmatrix} f_j^{(1)}(\vx)\\ \vdots\\ f_j^{(n)}(\vx)\\ f_j^{(n+1)}(\vx) \end{pmatrix}= \begin{pmatrix} x_1\\ \vdots\\ x_n\\ \vx^T A_j \vx \end{pmatrix}
    \text{ for all } j=1,\ldots,N.$$
    That is, $\vfm_j$ is a graph of a quadratic function.
    Then \autoref{eq:general} reads as follows:
    \begin{equation}\label{eq:quadratic_approximation}
    	\begin{aligned}
    		\vx \to {\bf p}(\vx) = \Psi[\vp](\vx) &:=
    		\sum_{j=1}^{N} \alpha_j \sigma \left( \bw_j^T \vx + \vx^T A_j \vx +\theta_j\right) \\
    		&\text{ with }  \alpha_j, \theta_j \in \R, \bw_j \in \R^n, A_j \in \R^{n \times n} .
    	\end{aligned}
    \end{equation}
    $\vp$ denotes the parametrization of ${\bf p}$:
    \begin{equation} \label{eq:pq}
    	\begin{aligned}
    		\vp = [\alpha_1\ldots,\alpha_N; \bw_1,\ldots,\bw_N; A_1,\ldots,A_N;\theta_1,\ldots,\theta_N] \in \R^{n_*}
    		\text{ with }
    		n_* = (n^2+n+2)N.
    	\end{aligned}
    \end{equation}
    Note that in \autoref{eq:quadratic_approximation} all parameters of the entries of the matrices $A_j$ are parameters.
    \item{Constrained quadratic neural networks {\bf (CQNN)}:} These are networks, where the entries of the matrices $A_j$, $j=1,\ldots,N$ are constrained:
    	\begin{enumerate}
    		\item Let
    		$$\vfm(\vx)=\vfm_j(\vx) = \begin{pmatrix} f_j^{(1)}(\vx)\\ \vdots\\ f_j^{(n)}(\vx) \\ f_j^{(n+1)}(\vx)\end{pmatrix}=
    		\begin{pmatrix} x_1\\ \vdots\\ x_n \\ 0\end{pmatrix} \text{ for all } j=1,\ldots,N.$$
    		That is $A_j =0$ for all $j=1,\ldots,N$. This set of {\bf CQNN}s corresponds with the {\bf ALNN}s defined in \autoref{eq:classical_approximation}.
    		\item Let $A_j \in \R^{n \times n}$, $j=1,\ldots,N$ be chosen and fixed. We denote by {\bf MCNN} the family of functions
    		    \begin{equation}\label{eq:quadratic_approximation_fixed}
    			\begin{aligned}
    				\vx \to {\bf p}(\vx) = \Psi[\vp](\vx) &:=
    				\sum_{j=1}^{N} \alpha_j \sigma \left( \bw_j^T \vx + \xi_j \vx^T A_j \vx +\theta_j\right) \\
    				&\text{ with }  \alpha_j, \theta_j, \xi_j \in \R, \bw_j \in \R^n, A_j \in \R^{n \times n},
    			\end{aligned}
    		    \end{equation}
    		    which is parametrized by the vector
    		\begin{equation} \label{eq:pqc}
    			\begin{aligned}
    				\vp = [\alpha_1\ldots,\alpha_N; \bw_1,\ldots,\bw_N; \xi_1,\ldots,\xi_N; \theta_1,\ldots,\theta_N] \in \R^{n_*}
    				\text{ with }
    				n_* = (n+3)N.
    			\end{aligned}
    		\end{equation}
            In particular, choosing $A_j=\mathcal{I}$ we get:
            \begin{equation}\label{eq:radial_approximation}
   	             \begin{aligned}
   		           \vx \to {\bf p}(\vx) = \Psi[\vp](\vx) &:=
   		            \sum_{j=1}^{N} \alpha_j \sigma \left(\bw_j^T \vx + \xi_j \norm{\vx}^2 +\theta_j\right) \\
   	             	& \text{ with } \alpha_j, \theta_j \in \R, \bw_j \in \R^n, \xi_j \in \R.
   	             \end{aligned}
           \end{equation}
       \end{enumerate}
   \item{Radial quadratic neural networks {\bf (RQNN)s}:}
   For $\xi_j \neq 0$ the argument in \autoref{eq:radial_approximation} rewrites to
    			\begin{equation}\label{eq:nus}
    				\begin{aligned}
    					\nu_j(\vx) := \xi_j \norm{\vx}^2 + \hat{\bw}_j^T\vx  + \theta_j
    					= \xi_j \norm{\vx - \vy}^2 + \kappa_j
    				\end{aligned}
    			\end{equation}
    			with
    			\begin{equation} \label{eq:kappa}
    				\vy_j = - \frac{1}{2\xi_j} \hat{\bw}_j^T \text{ and } \kappa_j =
    				\theta_j - \frac{\norm{\hat{\bw}_j}^2}{4 \xi_j}.
    			\end{equation}
    		    We call the set of functions from \autoref{eq:radial_approximation}, which satisfy
    		    \begin{equation} \label{eq:circle}
    		    		\xi_j \geq 0 \text{ and } \kappa_j \leq 0 \text{ for all } j=1,\ldots,N,
    		    \end{equation}
                {\bf RQNN}s, radial neural network functions, because the level sets of $\nu_j$ are circles. These are \emph{radial basis functions}
                (see for instance \cite{Buh03} for a general overview).
   \item{Sign based quadratic {\bf (SBQNN)} and cubic neural networks {\bf (CUNN)}}: Let $m=n$.
            \begin{enumerate}
    		\item {\bf (SBQNN)}: Let
    		\begin{equation} \label{eq:sgn_distance}
    			f^{(i)} (\vx)= \text{sgn}(x_i) x_i^2 \text{ for } i=1,\ldots,n,
    		\end{equation}
    		or alternatively
    		\begin{equation*}
    			f^{(i)}(\vx) = \norm{\vx}^2 \text{sgn}(x_i) \text{ for } i=1,\ldots,n.
    		\end{equation*}
    		In the first case, and in the second case similarly, we obtain the family of functions
    		\begin{equation}\label{eq:generalf}
    			\Psi[\vp](\vx) := \sum_{j=1}^{N} \alpha_j \sigma
    			\left(\sum_{i=1}^n w_j^{(i)}\text{sgn} (x_i) x_i^2+\theta_j\right) \text{ with } \alpha_j, \theta_j \in \R \text{ and } \bw_j \in \R^n.
    		\end{equation}
    		We call these functions \emph{signed squared neural networks}. Note, here $\vp \in \R^{n_*}$ with $n_*=(n+2)N$.
    		   		\item {\bf (CUNN)}: Let
    		\begin{equation} \label{eq:cube}
    			f^{(i)} (\vx)= x_i^3 \text{ for } i=1,\ldots,n.
    		\end{equation}
    	    We obtain the family of functions
    		\begin{equation}\label{eq:cubef}
    			\Psi[\vp](\vx) := \sum_{j=1}^{N} \alpha_j \sigma
    			\left(\sum_{i=1}^n w_j^{(i)} x_i^3+\theta_j\right) \text{ with } \alpha_j, \theta_j \in \R \text{ and } \bw_j \in \R^n.
    		\end{equation}
    		We call these functions \emph{cubic neural networks}. Again $\vp \in \R^{n_*}$ with $n_*=(n+2)N$.
    	\end{enumerate}	
    \end{description}
    \end{definition}
    \begin{remark} \label{re:versatile}
    	\begin{enumerate}
    		\item It is obvious that generalized quadratic neural network functions and matrix constrained neural networks
    		are more versatile than affine linear works, and thus they satisfy the universal approximation property (see \cite{Cyb89}).
    		\item  The constraint \autoref{eq:circle} does not allow to use the universal approximation theorem in a straight forward manner. In fact we prove an approximation result indirectly via a convergence rates result (see \autoref{sec:rates}).
    		\item Sign based quadratic and cubic neural networks satisfy the universal approximation property, which is proven below in \autoref{th:qdf} by reducing it to the classical result from \cite{Cyb89}.
    	\end{enumerate}
    \end{remark}

In order to prove the universal approximation property of {\bf SBQNN}s and {\bf CUNN}s we review the \emph{universal approximation theorem} as formulated by \cite{Cyb89} first. It is based discriminatory properties of the
functions $\sigma$.
\begin{definition}[Discriminatory function] \label{de:disc_function}
	A function $\sigma : \R \to \R$ is called \emph{discriminatory} (see \cite{Cyb89}) if every measure $\mu$ on $[0,1]^n$, which satisfies
	\begin{equation*}
		\int_{[0,1]^n} \sigma (\bw^T \vx + \theta)\,d\mu(\vx)=0 \quad \text{ for all } \bw \in \R^n \text{ and } \theta \in \R
	\end{equation*}
	implies that $\mu \equiv 0$.
\end{definition}
\begin{example} \label{ex:sigmoid}
	Note that every non-polynomial function is \emph{discriminatory} (this follows from the results in \cite{LesLinPinScho93}).
	Therefore the choices of activation function in \autoref{de:affinenns}
	are discriminatory for the Lebesgue-measure.
\end{example}
With these basic concepts we are able to recall Cybenko's universal approximation result.

\begin{theorem}[\cite{Cyb89}]\label{le:LinearApproximation}
	Let $\sigma:\R \to \R$ be a continuous discriminatory function. Then, for
	every function $g \in C([0,1]^n)$ and every $\epsilon>0$, there exists a function
	\begin{equation}\label{eq:Linear}
		G_\epsilon(\vx) = \sum_{j=1}^{N}\alpha_j \sigma(\bw_j^T\vx + \theta_j) \qquad \text{ with } N \in \N, \alpha_j, \theta_j \in \R, \bw_j \in \R^n,
	\end{equation}
	satisfying
	\begin{equation*}
		|G_\epsilon(\vx)-g(\vx)|<\epsilon \text{ for all } \vx\in [0,1]^n.
	\end{equation*}
\end{theorem}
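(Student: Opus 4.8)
The plan is to prove the statement by the classical functional-analytic density argument of \cite{Cyb89}. First I would introduce the linear subspace of all finite neuron combinations,
$$S := \set{\vx \to \sum_{j=1}^{N} \alpha_j \sigma(\bw_j^T \vx + \theta_j): N \in \N, \alpha_j, \theta_j \in \R, \bw_j \in \R^n} \subseteq C([0,1]^n),$$
and observe that the continuity of $\sigma$ guarantees $S \subseteq C([0,1]^n)$, so that the supremum norm is available. The assertion of the theorem is then equivalent to the claim that $S$ is dense in $(C([0,1]^n), \norm{\cdot}_\infty)$: density means that for every $g \in C([0,1]^n)$ and every $\epsilon > 0$ there exists $G_\epsilon \in S$ with $\norm{G_\epsilon - g}_\infty < \epsilon$, which is precisely the desired uniform estimate.

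Next I would argue by contradiction and assume that $S$ is \emph{not} dense, so that its closure $\overline{S}$ is a proper closed subspace of $C([0,1]^n)$. By the Hahn--Banach theorem there exists a nonzero bounded linear functional $L$ on $C([0,1]^n)$ that vanishes identically on $\overline{S}$, and hence on $S$. The key representation step then invokes the Riesz representation theorem for $C([0,1]^n)$: every such $L$ is realized as integration against a finite signed regular Borel measure $\mu$ on $[0,1]^n$, that is $L(h) = \int_{[0,1]^n} h(\vx)\,d\mu(\vx)$, and $\mu \not\equiv 0$ because $L \neq 0$.

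Finally, since the map $\vx \to \sigma(\bw^T \vx + \theta)$ lies in $S$ for every $\bw \in \R^n$ and every $\theta \in \R$, the vanishing of $L$ on $S$ yields
$$\int_{[0,1]^n} \sigma(\bw^T \vx + \theta)\,d\mu(\vx) = 0 \quad \text{for all } \bw \in \R^n \text{ and } \theta \in \R.$$
By the definition of a discriminatory function (\autoref{de:disc_function}) this forces $\mu \equiv 0$, contradicting $\mu \not\equiv 0$. Hence $S$ must be dense in $C([0,1]^n)$, which is exactly the claim.

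The genuine analytic content is the combination of Hahn--Banach (to produce an annihilating functional) and Riesz representation (to realize it as a measure); the main \emph{conceptual} obstacle, namely passing from ``the neuron family cannot be separated by any measure'' to ``the neuron family spans a dense subspace'', is already packaged into the hypothesis. Indeed, \autoref{de:disc_function} is engineered precisely so that the last displayed identity closes the contradiction, and \autoref{ex:sigmoid} ensures that the relevant activation functions satisfy this discriminatory property, so no further work on $\sigma$ is required.
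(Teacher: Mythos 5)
Your argument is correct and is precisely Cybenko's original proof: Hahn--Banach applied to the (assumed proper) closed span of the neurons, Riesz representation to realize the annihilating functional as a finite signed regular Borel measure, and the discriminatory hypothesis of \autoref{de:disc_function} to force that measure to vanish. The paper itself gives no proof of this statement --- it is recalled verbatim from \cite{Cyb89} --- so there is nothing to contrast; your write-up simply supplies the standard argument that the citation points to, and it is sound.
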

In the following we formulate and prove a modification of Cybenko's universal approximation result \cite{Cyb89} for shallow \emph{generalized} neural networks as introduced in \autoref{de:gnn}.

\begin{theorem}[Generalized universal approximation theorem]\label{th:general}
	Let $\sigma:\R \to \R$ be a continuous discriminatory function and assume that $\vfm_j : [0,1]^n \to \R^{m}$, $j=1,\ldots,N$ are injective (this in particular means that $n \leq m$) and continuous, respectively. We denote $\vfm = (\vfm_1,\ldots,\vfm_N)$.
	
	Then for every $g\in C([0,1]^n)$ and every $\epsilon>0$ there exists some function
	\begin{equation*}
		\Psi^\vfm(\vx) := \sum_{j=1}^{N} \alpha_j\sigma\left(\bw_j^T\vfm_j(\vx) +\theta_j\right) \text{ with } \alpha_j, \theta_j \in \R \text{ and } \bw_j \in \R^{m}
	\end{equation*}
	satisfying
	\begin{equation*}
		\abs{\Psi^\vfm(\vx) - g(\vx)} < \epsilon \text{ for all }\vx\in [0,1]^n.
	\end{equation*}
\end{theorem}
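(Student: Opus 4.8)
The plan is to reduce the statement to Cybenko's theorem (\autoref{le:LinearApproximation}) applied in the \emph{feature space} $\R^m$, by transporting $g$ through the feature map and extending it continuously. Since the argument uses only injectivity and continuity of the $\vfm_j$, and a single such map already yields density, I would prove the result using one injective continuous feature map (e.g.\ $\vfm = \vfm_1$) in every neuron; the number $N$ of neurons is then supplied by \autoref{le:LinearApproximation}, and after relabelling (or simply taking $\vfm_j = \vfm$) this produces a function of the required form $\Psi^\vfm$.

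First I would observe that $K := \vfm([0,1]^n) \subseteq \R^m$ is compact, being the continuous image of a compact set, and that $\vfm : [0,1]^n \to K$ is a homeomorphism: a continuous injection from a compact space into a Hausdorff space is a homeomorphism onto its image. In particular $\vfm^{-1} : K \to [0,1]^n$ is continuous, so $\tilde g := g \circ \vfm^{-1}$ is a well-defined continuous function on $K$.

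Next, since $K$ is compact it is bounded, hence contained in some cube; after composing $\vfm$ with an invertible affine map (which merely reparametrizes the argument $\bw_j^T\vfm(\vx)+\theta_j$ and therefore leaves the admissible neuron class unchanged) I may assume $K \subseteq [0,1]^m$. As $K$ is closed in the normal space $[0,1]^m$, the Tietze extension theorem yields $G \in C([0,1]^m)$ with $G|_K = \tilde g$. Applying \autoref{le:LinearApproximation} in dimension $m$ to $G$ and $\epsilon$ gives $N$, together with $\alpha_j,\theta_j \in \R$ and $\bw_j \in \R^m$, such that $\abs{\sum_{j=1}^{N}\alpha_j\sigma(\bw_j^T\vy+\theta_j) - G(\vy)} < \epsilon$ for all $\vy \in [0,1]^m$. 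Substituting $\vy = \vfm(\vx)$ for $\vx \in [0,1]^n$ forces $\vy \in K$, where $G(\vfm(\vx)) = \tilde g(\vfm(\vx)) = g(\vx)$, so $\abs{\Psi^\vfm(\vx) - g(\vx)} < \epsilon$ on $[0,1]^n$, as required.

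I expect the main obstacle to be this bridging step rather than the final substitution: one must justify that $g$, transported to the feature space, extends continuously to the cube on which Cybenko's theorem is available. This rests on two standard but essential facts — that injectivity together with compactness make $\vfm$ a homeomorphism onto its image (so that $g\circ\vfm^{-1}$ is genuinely continuous), and that Tietze extension applies because $K = \vfm([0,1]^n)$ is a closed subset of the cube. The affine invariance of the neuron class, used to place $K$ inside $[0,1]^m$, is routine but should be stated explicitly so that \autoref{le:LinearApproximation} can be invoked verbatim.
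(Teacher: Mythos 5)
Your proof is correct and follows essentially the same route as the paper's: invert the feature map (continuous by compactness of $[0,1]^n$), transport $g$ to feature space, extend by Tietze, apply \autoref{le:LinearApproximation} in $\R^m$, and pull back along the feature map. The one genuine refinement you add is the affine normalization placing $K=\vfm([0,1]^n)$ inside $[0,1]^m$ before invoking Cybenko — the paper's proof applies its estimate \autoref{eq:1}, valid only on $[0,1]^m$, at the points $\vfm_j(\vx)$, which need not lie in that cube — and working with a single $\vfm$ also avoids the paper's conflation of the neuron index with the feature-map index; both are worth stating explicitly as you do.
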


\begin{proof}
	We begin the proof by noting that since $\vx \to \vfm_j(\vx)$ is injective, the inverse function on the range of $\vfm_j$ is well-defined, and we write $\vfm_j^{-1}:\vfm_j([0,1]^n) \subseteq \R^m \to [0,1]^n \subseteq \R^n$.
	The proof that $\vfm_j^{-1}$ is continuous relies on the fact that the domain $[0,1]^n$ of $\vfm_j$ is compact, see for instance \cite[Chapter XI, Theorem 2.1]{Dug78}. Then applying the Tietze–Urysohn–Brouwer extension theorem (see \cite{Kel55}) to the continuous function $g \circ \vfm_j^{-1} : \vfm_j([0,1]^n) \to \R$, we see that this function can be extended continuously to $\R^m$. This extension will be denoted by $g^*:\R^m \to \R$.
	
	We apply \autoref{le:LinearApproximation} to conclude that there exist $\alpha_j, \theta_j \in \R$ and $ \bw_j \in \R^{m}$, $j=1,\ldots,N$ such that
	\begin{equation*}
		\Psi^*(\vzm) := \sum_{j=1}^{N} \alpha_j \sigma(\bw_j^T \vzm +\theta_j) \text{ for all } \vzm \in \R^{m},
	\end{equation*}
	which satisfies
	\begin{equation}\label{eq:1}
		\abs{\Psi^*(\vzm)-g^*(\vzm)} <\epsilon \text{ for all } \vzm \in [0,1]^m.
	\end{equation}
	Then, because $\vfm_j$ maps into $\R^m$ we conclude, in particular, that
	\begin{equation*}
		\begin{aligned}
			\Psi^*(\vfm_j(\vx)) &=\sum_{j=1}^{N} \alpha_j \sigma(\bw_j^T \vfm_j(\vx)+\theta_j)
		\end{aligned}
	\end{equation*}		
	and
	\begin{equation*}
		\begin{aligned}
			\abs{\Psi^*(\vfm_j(\vx)) - g(\vx)}
			=\abs{\Psi^*(\vfm_j(\vx)) - g^*(\vfm_j(\vx))} <\epsilon.
		\end{aligned}
	\end{equation*}
	Therefore $\Psi^\vfm(\cdot):= \Psi^*(\vfm_j(\cdot))$ satisfy the claimed assertions.
\end{proof}
It is obvious that the full variability in $\bw$ is the key to bring our proof and the universal approximation theorem in context. That is, if $\bw_j$, $\theta_j$, $j=1,\ldots,N$ are allowed to vary over $\R^n$, $\R$, respectively. {\bf RQNN}s are constrained to $\theta_j < \norm{\vw_j}^2/(4 \xi_j)$ in \autoref{eq:kappa} and thus
	 \autoref{th:general} does not apply. Interestingly \autoref{thm:ConvR} applies and allows to approximate functions in $\mathcal{L}^1(\R^n)$ (even with rates).

\begin{corollary}[Universal approximation properties of {\bf SBQNN}s and {\bf CUNN}s] \label{th:qdf}
	Let the discriminatory function $\sigma:\R \to \R$ be Lipschitz continuous.
	All families of neural network functions from \autoref{de:quadratic} satisfy the universal approximation property on $[0,1]^n$.
\end{corollary}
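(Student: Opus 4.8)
The plan is to treat the families of \autoref{de:quadratic} in three groups, according to which of the two available approximation principles reaches them. The first group consists of the \textbf{ALNN}s themselves together with every family that contains them as a subset: the \textbf{GQNN}s \autoref{eq:quadratic_approximation} (take $A_j=0$), the \textbf{MCNN}s \autoref{eq:quadratic_approximation_fixed} (take $\xi_j=0$), and the unconstrained radial networks \autoref{eq:radial_approximation} (again $\xi_j=0$). Since $\sigma$ is continuous and discriminatory, \autoref{le:LinearApproximation} gives the universal approximation property for the \textbf{ALNN}s; as every \textbf{ALNN} approximant of a given $g$ is simultaneously a member of each of these larger families, the property is inherited verbatim. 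No quadratic structure and no Lipschitz hypothesis are needed here.

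The second group is the substantive one: the sign-based networks \textbf{SBQNN} \autoref{eq:generalf} and the cubic networks \textbf{CUNN} \autoref{eq:cubef}. For these I would apply the generalized theorem \autoref{th:general} with $m=n$ and with all decision maps equal, $\vfm_j \equiv \vfm$. The hypotheses of \autoref{th:general} then reduce to continuity and injectivity of $\vfm$ on $[0,1]^n$. For \textbf{CUNN}, $\vfm(\vx)=(x_1^3,\dots,x_n^3)$ and each coordinate map $t\mapsto t^3$ is continuous and strictly increasing; for \textbf{SBQNN}, $\vfm(\vx)=(\sign(x_1)x_1^2,\dots,\sign(x_n)x_n^2)$ and each $t\mapsto \sign(t)t^2$ is likewise continuous and strictly increasing. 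Strict monotonicity in every coordinate forces injectivity of $\vfm$, and since the $\vfm_j$ coincide, the function produced by \autoref{th:general} is precisely of the prescribed form \autoref{eq:cubef}, respectively \autoref{eq:generalf}. (On $[0,1]^n$ the coordinates are nonnegative, so $\vfm$ maps into $[0,1]^n$, which is the regime in which \autoref{th:general} operates.) The only delicate point is the injectivity check, and it is settled by monotonicity.

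The third group, the radial networks \textbf{RQNN}, is where I expect the real difficulty. The defining constraint \autoref{eq:circle}, namely $\xi_j\ge 0$ and $\kappa_j\le 0$, couples $\bw_j$ and $\theta_j$ through \autoref{eq:kappa} and so destroys the free variation of weights and biases on which both \autoref{le:LinearApproximation} and \autoref{th:general} depend; neither reduction is available, as already flagged after \autoref{th:general}. Rather than argue density directly, I would invoke the quantitative result \autoref{thm:ConvR}: it builds wavelet frames out of radial quadratic decision functions and thereby shows that \textbf{RQNN}s approximate every $\mathcal{L}^1(\R^n)$ function, with explicit rates, so in particular they approximate functions on $[0,1]^n$ in the $\mathcal{L}^1$ sense. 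This indirect route is exactly where the Lipschitz continuity of $\sigma$ assumed in the statement is consumed, since it is a standing hypothesis of the frame construction underlying \autoref{thm:ConvR}. Assembling the three groups gives the assertion for all families of \autoref{de:quadratic}.
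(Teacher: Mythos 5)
Your proposal is correct and, for the families named in the corollary's title, it coincides with the paper's argument: the paper's entire proof is the one-line observation that \autoref{th:general} applies because the maps $\vfm_j$ of \autoref{de:quadratic} are injective, which is exactly your second group (for \textbf{SBQNN} and \textbf{CUNN} the coordinatewise strict monotonicity of $t\mapsto\sign(t)t^2$ and $t\mapsto t^3$ is the whole content). Where you differ is in being more careful than the paper about the scope of the claim. Your first group (inheritance from the \textbf{ALNN}s by taking $A_j=0$, respectively $\xi_j=0$) is what the paper relegates to \autoref{re:versatile} rather than to the proof; either route works, and your subset argument is the more elementary of the two. More importantly, your third group addresses a real gap in the printed proof: the constrained \textbf{RQNN}s of \autoref{eq:circle} are \emph{not} reachable via \autoref{th:general} --- the paper itself says so immediately after that theorem and again in \autoref{re:versatile} --- so the one-line proof, read literally, does not establish the corollary for that family. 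Your appeal to \autoref{thm:ConvR} is the paper's intended repair, but two caveats should be made explicit: first, \autoref{thm:ConvR} yields $L^2$-approximation of functions in $\mathcal{L}^1(\R^n)$ at rate $(N+1)^{-1/2}$, which is a genuinely different (and, measured in the uniform norm, weaker) notion than the $C([0,1]^n)$-density meant by ``universal approximation'' for the other families; second, the hypothesis actually consumed there is not Lipschitz continuity of $\sigma$ but monotonicity together with the decay conditions \autoref{eq:sigma0} on $\sigma$, $\sigma'$ and $\sigma''$, so Lipschitz continuity alone would not drive the frame construction. With those two qualifications, your three-group decomposition is a more complete proof than the one the paper prints.
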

\begin{proof} The proof follows from \autoref{th:general} and noting that all our functions $\bf{f}_j$, $j=1,\ldots,N$ defined in \autoref{de:quadratic} are injective.
\end{proof}

    \section{Motivation}

    \subsection{Motivation 1: The Shepp-Logan phantom} \label{sec:slp}
    
    Almost all tomographic algorithms are tested with the Shepp-Logan phantom (see \cite{SheLog74}).
    It is obvious that it can be exactly represented with a {\bf GQNN} with 10 coefficients, while we conjecture that it can neither be exactly represented with an {\bf ALNN} or a {\bf DNN} with a finite number of coefficients. This observation extends immediately to all function that contains ``localized'' features.
    In this sense sparse approximations with linear neural networks would be more costly in practice.

	\subsection{Motivation 2: The Gauss-Newton iteration} \label{sec:mot}
	
	Let us assume that $\A:{\bf X} \to {\bf Y}$ is a linear operator. We consider the solution $\bf p$ of \autoref{eq:opapp} to be an element of an {\bf ALNN} or {\bf DNN}, as defined in \autoref{de:affinenns}. Therefore $\bf p$ can be parameterized by a vector $\vp$ as in \autoref{eq:p} (for shallow linear neural networks) or \autoref{eq:pd} (for deep neural networks). For {\bf GQNN}s the adequate parametrization can be read out from \autoref{de:quadratic}.
	In other words, we can write every searched for function $\bf p$ via an operator $\Psi$ which maps a parameter $\vp \in \R^{n_*}$ to a function in ${\bf X}$, i.e.,
	\begin{equation*}
		\vx \to {\bf p}(\vx) = \Psi[\vp](\vx).
    \end{equation*}
	Therefore, we aim for solving the \emph{nonlinear} operator equation
	\begin{equation} \label{eq:sol}
		\mathcal{N}(\vp):=F \circ \Psi[\vp] = \by.
	\end{equation}
	For {\bf ALNN}s  we have proven in \cite{SchHofNas23} that the Gauss-Newton method (see \autoref{eq:newton}, below) is locally, quadratically convergent under conditions, which guarantee that during the iteration the gradients of $\Psi$ does not degenerate, i.e., that ${\bf P}$ is a finite-dimensional manifold. Let us now calculate derivatives of the radial neural network operators $\Psi$ (see \autoref{eq:radial_approximation}), which are the basis to prove that also the Gauss-Newton with {\bf RQNN}s is convergent:
	\begin{example} \label{ex:quadratic}
	Let $\sigma : \R \to \R$ be continuous activation function, where all derivatives up to order $2$ are uniformly bounded, i.e., in formulas
	\begin{equation} \label{eq:sigma}
		\sigma \in C^2(\R;\R) \cap \mathcal{B}^2(\R;\R),
    \end{equation}
   such as the sigmoid function.
	 Then, the derivatives of a radial neural network ({\bf RQNN}) $\Psi$ as in \autoref{eq:radial_approximation}
	with respect to the coefficients of $\vp$ in \autoref{eq:pq} are given by the following formulas, where $\nu_s$ is defined in \autoref{eq:nus}:
	\begin{itemize}
		\item Derivative with respect to $\alpha_s$, $s=1,\ldots,N$:
		\begin{equation}\label{eq:ca_d1}
			\begin{aligned}
				\frac{\partial \Psi}{\partial \alpha_s}[\vp](\vx) &= \sigma (\nu_s) \text{ for } s=1,\ldots,N.
			\end{aligned}
		\end{equation}	
		\item Derivative with respect to $w_s^{(t)}$ where $s=1,\ldots,N$, $t=1,\ldots,n$:
		\begin{equation}\label{eq:ca_d2}
			\begin{aligned}
				\frac{\partial \Psi}{\partial w_s^{(t)}} [\vp](\vx) &= \sum_{j=1}^{N} \alpha_j\sigma' (\nu_j) \delta_{s=j}x_t = \alpha_s \sigma' (\nu_s) x_t \;.
			\end{aligned}
		\end{equation}
		\item Derivative with respect to $\theta_s$ where $s=1,\ldots,N$:
		\begin{equation}\label{eq:ca_d3}
			\begin{aligned}\frac{\partial \Psi}{\partial \theta_s} [\vp](\vx)&= \sum_{j=1}^{N} \alpha_j\sigma' (\nu_j) \delta_{s=j} = \alpha_s \sigma' (\nu_s).
			\end{aligned}
		\end{equation}
	    \item Derivative with respect $\xi_s$:	
	    \begin{equation}\label{eq:quadratic_approximation_2}
	    	\frac{\partial \Psi}{\partial \xi_s}[\vp](\vx) = \alpha_s \sigma' (\nu_s) \norm{\vx}^2.
	    \end{equation}
	
	\end{itemize}
	Note, that all the derivatives above are functions in ${\bf X} = L^2([0,1]^n)$.
\end{example}
	For formulating a convergence result for the Gauss-Newton method (see \autoref{eq:newton}) below) we need to specify the following assumptions, which were postulated first in \cite{SchHofNas23} to prove local convergence of a
	Gauss-Newton method on a set of shallow affine linear network functions. Here we verify convergence of the Gauss-Newton method on a set of {\bf RQNN}s, as defined in \autoref{eq:quadratic_approximation}. The proof is completely analogous as in \cite{SchHofNas23}.
	
    \begin{assumption}\label{ass:general}
    	\begin{itemize}
    		\item $F:{\bf X} = L^2([0,1]^n) \to {\bf Y}$ be a linear, bounded operator with trivial nullspace and dense range.
    		\item  $\Psi: \domain{\Psi} \subseteq \R^{(n+2)N} \to {\bf X}$ is a shallow {\bf RQNN} generated by a strictly monotonic activation function $\sigma$ which satisfies \autoref{eq:sigma},
    		like a sigmoid function.
    		\item All derivatives in \autoref{eq:ca_d1} - \autoref{eq:quadratic_approximation_2} are locally linearly independent functions.
    \end{itemize}
    \end{assumption}
Now, we recall a local convergence result:
\begin{theorem}[Local convergence of Gauss-Newton method with {\bf RQNN}s] \label{th:newtonNN}
	Let $\mathcal{N}$ be as in \autoref{eq:sol} be the composition of a linear operator $F$ and the {\bf RQNN} network
	operator as defined in \autoref{eq:radial_approximation}, which satisfy \autoref{ass:general}.
	Moreover,
	let $\vp^0 \in \domain{\Psi}$, which is open, be the starting point of the Gauss-Newton iteration
			\begin{equation} \label{eq:newton} \begin{aligned}
			\vp^{k+1} = \vp^k - \mathcal{N}'(\vp^k)^\dagger(\mathcal{N}(\vp^k)-\by)
			\quad k \in \N_0,
		\end{aligned}
	\end{equation}
    where $\mathcal{N}'(\vp^k)^\dagger$ denotes the Moore-Penrose inverse of $\mathcal{N}'(\vp^k)$.\footnote{For a detailed exposition on generalized inverses see \cite{Nas76}.}

    Moreover,
	let $\vp^\dagger \in \domain{\Psi}$ be a solution of \autoref{eq:sol}, i.e.,
	\begin{equation}\label{eq:h}
	  \mathcal{N}(\vp^\dagger) = \by.
	\end{equation}
	Then the Gauss-Newton iterations are locally, that is if $\vp^0$ is sufficiently close to $\vp^\dagger$, and quadratically converging.
\end{theorem}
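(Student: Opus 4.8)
The plan is to reduce the statement to the classical local convergence theory for Newton-type methods, following the template established for {\bf ALNN}s in \cite{SchHofNas23}. The engine of the argument is an error recursion for $e^k := \vp^k - \vp^\dagger$ obtained from the iteration \autoref{eq:newton} together with the exactness $\mathcal{N}(\vp^\dagger) = \by$ from \autoref{eq:h}. Subtracting $\vp^\dagger$ from both sides of \autoref{eq:newton} and inserting $\by = \mathcal{N}(\vp^\dagger)$, I would write
\begin{equation*}
  e^{k+1} = e^k - \mathcal{N}'(\vp^k)^\dagger\bigl(\mathcal{N}(\vp^k) - \mathcal{N}(\vp^\dagger)\bigr),
\end{equation*}
and then represent the increment by the fundamental theorem of calculus, $\mathcal{N}(\vp^k) - \mathcal{N}(\vp^\dagger) = \int_0^1 \mathcal{N}'(\vp^\dagger + t e^k)\,e^k \d t$. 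All remaining work consists in controlling the two ingredients of this recursion: the Lipschitz continuity of $\mathcal{N}'$ and the boundedness and left-inverse property of the Moore--Penrose inverse $\mathcal{N}'(\vp)^\dagger$ near $\vp^\dagger$.

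First I would record that $\mathcal{N} = F \circ \Psi$ is Fréchet differentiable with $\mathcal{N}'(\vp) = F\,\Psi'[\vp]$, where the columns of $\Psi'[\vp]$ are exactly the four derivative formulas of \autoref{ex:quadratic}; since $F$ is linear and bounded this is immediate. The substantive analytic step is Lipschitz continuity of $\vp \mapsto \mathcal{N}'(\vp)$ on a neighborhood of $\vp^\dagger$. This follows from the assumption $\sigma \in C^2(\R;\R)\cap\mathcal{B}^2(\R;\R)$ of \autoref{eq:sigma}: the second derivatives of $\Psi$ are of the form $\sigma''(\nu_j)$ times products of the decision-function factors ($x_t$, $\norm{\vx}^2$, and constants), all of which are uniformly bounded over the compact cube $[0,1]^n$. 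Hence $\norm{\mathcal{N}'(\vp)-\mathcal{N}'(\tilde\vp)} \le L\norm{\vp-\tilde\vp}$ with $L$ depending only on $\norm{F}$ and the uniform bounds on $\sigma'$, $\sigma''$.

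Next I would exploit injectivity of $\mathcal{N}'(\vp^\dagger)$. By the third item of \autoref{ass:general} the derivative functions are locally linearly independent, so their $L^2$-Gram matrix, which equals $\Psi'[\vp^\dagger]^*\Psi'[\vp^\dagger]$, is nonsingular and $\Psi'[\vp^\dagger]$ is injective; composing with $F$, which has trivial nullspace by the first item of \autoref{ass:general}, shows $\mathcal{N}'(\vp^\dagger)$ is injective. Because the domain $\R^{(n+2)N}$ is finite-dimensional, the range is finite-dimensional, hence closed, so $\mathcal{N}'(\vp^\dagger)^\dagger$ is a bounded left inverse and $\mathcal{N}'(\vp^\dagger)^\dagger \mathcal{N}'(\vp^\dagger) = I$. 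By continuity of $\vp \mapsto \mathcal{N}'(\vp)^*\mathcal{N}'(\vp)$, this finite-dimensional Gram matrix stays positive definite on a ball around $\vp^\dagger$, which yields the persistence of the left-inverse identity and a uniform bound $\norm{\mathcal{N}'(\vp)^\dagger} \le C$ there. Inserting $\mathcal{N}'(\vp^k)^\dagger\mathcal{N}'(\vp^k)\,e^k = e^k$ into the recursion gives
\begin{equation*}
  e^{k+1} = \mathcal{N}'(\vp^k)^\dagger \int_0^1 \bigl[\mathcal{N}'(\vp^k) - \mathcal{N}'(\vp^\dagger + t e^k)\bigr] e^k \d t,
\end{equation*}
and the Lipschitz bound together with $\norm{\vp^k - (\vp^\dagger + t e^k)} = (1-t)\norm{e^k}$ yields $\norm{e^{k+1}} \le \tfrac{LC}{2}\norm{e^k}^2$. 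Choosing $\vp^0$ with $\tfrac{LC}{2}\norm{e^0} < 1$ keeps the iterates inside the ball and forces quadratic convergence.

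I expect the main obstacle to be the \emph{uniform} boundedness of $\mathcal{N}'(\vp)^\dagger$ on a whole neighborhood rather than merely at $\vp^\dagger$. Pointwise boundedness is automatic from finite-dimensionality of the domain, but the recursion needs a single constant $C$ valid for every iterate; this is precisely where the local linear independence hypothesis does the real work, guaranteeing that the smallest eigenvalue of $\mathcal{N}'(\vp)^*\mathcal{N}'(\vp)$ stays bounded away from zero. Care is also needed to confirm, by an induction running in parallel with the quadratic estimate, that each $\vp^k$ remains in $\domain{\Psi}$ and inside the ball on which the Lipschitz bound, the left-inverse identity, and the uniform bound $C$ hold simultaneously.
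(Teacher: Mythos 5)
Your proposal is correct in substance, and it fills in a proof that the paper itself does not spell out: the paper only asserts that ``the proof is completely analogous as in [SchHofNas23]'' and that it rests on the \emph{affine covariant} condition of Deuflhard--Hohmann. The difference between your route and the paper's is the form of the Lipschitz hypothesis. You control the two factors separately --- a plain Lipschitz bound $\norm{\mathcal{N}'(\vp)-\mathcal{N}'(\tilde\vp)}\le L\norm{\vp-\tilde\vp}$ and a uniform bound $\norm{\mathcal{N}'(\vp)^\dagger}\le C$ on a neighborhood --- and multiply them in the error recursion to get $\norm{e^{k+1}}\le\tfrac{LC}{2}\norm{e^k}^2$. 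The affine covariant formulation instead postulates the single condition $\norm{\mathcal{N}'(\vp)^\dagger(\mathcal{N}'(\vp')-\mathcal{N}'(\vp''))}\le\omega\norm{\vp'-\vp''}$, which is invariant under linear transformations of the data space; in the ill-posed setting this matters quantitatively, since your product $LC$ couples $\norm{F}$ with the inverse of the smallest singular value of $\mathcal{N}'(\vp)$ and can badly overestimate $\omega$, whereas the affine covariant constant annihilates $F$ from the estimate (this is exactly the point the paper's remark makes about the generalized inverse ``completely annihilating'' $F$). For the purely qualitative statement of local quadratic convergence both yield the same conclusion, and your finite-dimensionality argument for closed range, persistence of injectivity of the Gram matrix $\mathcal{N}'(\vp)^*\mathcal{N}'(\vp)$ via the local linear independence of the derivatives \autoref{eq:ca_d1}--\autoref{eq:quadratic_approximation_2}, and the ball-invariance induction are precisely the ingredients the cited template requires. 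One minor imprecision: your Lipschitz constant $L$ also depends on a bound for $\norm{\vp}$ on the chosen neighborhood (the second derivatives of $\Psi$ carry factors $\alpha_s$), not only on $\norm{F}$ and the bounds on $\sigma'$, $\sigma''$; since the neighborhood is bounded this is harmless.
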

\begin{remark}
	The essential condition in \autoref{th:newtonNN} is that all derivatives of $\Psi$ with respect to $\vp$ are linearly independent, which is a nontrivial research question. In \cite{SchHofNas23} we studied convergence of the Gauss-Newton method on a set of shallow affine linear neural networks, where the convergence result required linear independence of the derivatives specified in
	\autoref{eq:ca_d1} - \autoref{eq:ca_d3}. Here, for {\bf RQNN}s, on top, linear independence of the 2nd order moment function \autoref{eq:quadratic_approximation_2} needs to hold. Even linear independence of neural network functions (with derivatives) is still a challenging research topic (see \cite{Lam22}).
	
    Under our assumptions the ill--posedness of $F$ does not affect convergence of the Gauss-Newton method.
    	The generalized inverse completely annihilates $F$ in the iteration. The catch however is, that the data has to be attained in a finite dimensional space spanned by neurons (that is \autoref{eq:h} holds). This assumption is in fact restrictive: Numerical tests show that convergence of Gauss-Newton methods becomes arbitrarily slow if it is violated. Here the ill-posedness of $F$ enters. The proof of \autoref{th:newtonNN} is based on the \emph{affine covariant} condition (see for instance \cite{DeuHoh91}).
\end{remark}

In the following we show the complexity of the convergence condition of a Gauss-Newton method in the case when affine linear
\emph{deep neural network functions} are used for coding.
\begin{example}[Convergence conditions of deep neural networks] \label{ex:ccdn}
	We restrict our attention to the simple case $n=1$. We consider a 4-layer {\bf DNN} (consisting of two internal layers) with $\sigma_1=\sigma_2=\sigma$, which reads as follows:
	\begin{equation}\label{eq:DNN1d}
		x \to \Psi[\vp](x) := \sum_{j_2=1}^{N_2} \alpha_{j_2,2}
		\sigma \left( w_{j_2,2}
		\left( \sum_{j_1=1}^{N_1} \alpha_{j_1,1} \sigma \left(w_{j_1,1} x +\theta_{j_1,1} \right)\right) + \theta_{j_2,2}\right) .
	\end{equation}
    Now, we calculate by chain rule $\frac{\partial \Psi}{\partial w_{1,1}}[\vp](x)$. For this purpose we define
    \begin{equation*}
    	w_{1,1} \to \rho(w_{1,1}) := \sum_{j_1=1}^{N_1} \alpha_{j_1,1} \sigma \left(w_{j_1,1} x +\theta_{j_1,1} \right).
    \end{equation*}
    With this definition we rewrite \autoref{eq:DNN1d} to
    \begin{equation*}
    	x \to \Psi[\vp](x) := \sum_{j_2=1}^{N_2} \alpha_{j_2,2}
    	\sigma \left( w_{j_2,2} \rho(w_{1,1}) + \theta_{j_2,2}\right) ,
    \end{equation*}
    Since
    \begin{equation*}	
    	\rho'(w_{1,1}) := \alpha_{j_1,1} \sigma' \left(w_{1,1} x +\theta_{1,1} \right)x,
    \end{equation*}
    we therefore get
    \begin{equation}\label{eq:DNN1da}\begin{aligned}
	\frac{\partial \Psi}{\partial w_{1,1}} &= \sum_{j_2=1}^{N_2} \alpha_{j_2,2}
	\sigma' \left( w_{j_2,2} \rho(w_{1,1}) + \theta_{j_2,2}\right) w_{j_2,2} \rho'(w_{1,1}) \\
	&= \alpha_{j_1,1} x \sum_{j_2=1}^{N_2} \alpha_{j_2,2} w_{j_2,2}	
	\sigma' \left( w_{j_2,2} \rho(w_{1,1}) + \theta_{j_2,2}\right)  \sigma' \left(w_{1,1} x +\theta_{1,1} \right).
	\end{aligned}
\end{equation}
Note that $\Psi[\vp]$ is a function of $x$ depending on the parameter $\vp$, which contains $w_{1,1}$ as one component.

The complicating issue in a potential convergence analysis concerns the last identity, \autoref{eq:DNN1da}, where evaluations of $\sigma'$ at different arguments appear simultaneously, which makes it complicated to analyze and interpret linear independence of derivatives of $\Psi$ with respect to the single elements of $\vp$. Note that for proving convergence of the Gauss-Newton method, according to \autoref{eq:DNN1da}, we need to verify linear independence of then functions
	\begin{equation*}
		\begin{aligned}
			x \to&  \sigma' \left( w_{j_2,2} \rho(w_{1,1}) + \theta_{j_2,2}\right)  \sigma' \left(w_{1,1} x +\theta_{1,1} \right) \\
			& = \sigma' \left( w_{j_2,2} \left(\sum_{j_1=1}^{N_1} \alpha_{j_1,1} \sigma \left(w_{j_1,1} x +\theta_{j_1,1}  + \theta_{j_2,2}\right) \right)\right)\sigma' \left(w_{1,1} x +\theta_{1,1} \right).
		\end{aligned}
	\end{equation*}
    In other word, the potential manifold of quadratic neural networks is extremely complex.
\end{example}

The conclusion of this section is that the analysis of iterative regularization methods, like a Gauss-Newton method, gets significantly more complicated if deep neural networks are used as ansatz functions. In contrast using neural network with higher order nodes (such as radial) results in transparent moment conditions. So the research question discussed in the following is whether shallow higher order neural networks have similar approximation properties than deep neural network functions, which would reveal clear benefits of such for analyzing iterative methods for solving ill--posed problems.

	\section{Convergence rates for universal approximation of {\bf RQNN}s}
	\label{sec:rates}
	In the following we prove convergence rates of {\bf RQNN}s (as defined in \autoref{eq:radial_approximation}) in the $\mathcal{L}^1$-norm. To be precise we specify a subclass on {\bf RQNN}s for which we prove convergence rates results. This is a much finer result than the standard universal approximation result, \autoref{th:general}, since it operates on a subclass and provides convergence rates. However, it is the only approximation result so far, which we can provide. We recall that the constraints, $\xi_j \neq 0$ and $\kappa_j/\xi_j < 0$ (see \autoref{eq:kappa}), induce that the level-sets of the neurons are compact (circles) and not unbounded, as they are for shallow affine linear neurons. Therefore we require a different analysis (and different spaces) than in the most advanced and optimal convergence rates results for affine linear neural networks as for instance in \cite{SieXu22,SieXu23}. In fact the analysis is closer to an analysis of compact wavelets.

    The convergence rate will be expressed in the following norm:
 \begin{definition}\label{de:ell1space} $\mathcal{L}^1$ denotes the space of square integrable functions on $[0,1]^n$, which satisfy \cite[(1.10)]{BarCohDahDev08}
 		\begin{equation*}
 			\|f\|_{\mathcal{L}^1} := \inf\set{\sum_{g\in D}|c_g| : f= \sum_{g\in D}c_g g} < \infty.
 		\end{equation*}
 		Here $c_g$ are the coefficients of an wavelet expansion and $D$ is a countable, general set of wavelet functions.
        A discrete wavelet basis (see, for example, \cite{Dau92,Gra95,LouMaaRie98,Chu92}) is a (not necessarily orthonormal)
        basis of the space of square integrable functions, where the basis elements are given by dilating and 
        translating a mother wavelet function.
 \end{definition}
\begin{remark}
 Notice that the notation $\mathcal{L}^1$ does not refer to the common $L^1$-function space of absolutely integrable functions and depends on the choice of the wavelet system. For more properties and details on this space see \cite[Remark 3.11]{ShaCloCoi18}.
\end{remark}

	\begin{definition}[Circular scaling function and wavelets] \label{de:cf}
		Let $r > 0$ be a fixed constant and $\sigma$ be a discriminatory function as defined in \autoref{de:disc_function} such that $\abs{\int_{\R^n}\sigma(r^2-\|\vx\|^2)d\vx} < \infty$. Then let
		\begin{equation}\label{eq:varphi}
			\vx \in \R^n \to \varphi(\vx):= C_n \sigma(r^2 - \norm{\vx}^2),
		\end{equation}
		where $C_n$ is a normalizing constant such that $\int_{\R^n}\varphi(\vx)d\vx=1$.\\	
		Then we define for $k \in \Z$ the \emph{radial scaling functions} and \emph{wavelets}
		\begin{equation}\label{eq:S}
			\begin{aligned}
				(\vy,\vy) \in \R^n \times \R^n \to \scinn{k}(\vx,\vy) &:= 2^k \varphi(2^{k/n}(\vx-\vy)) \text{ and } \\
				(\vy,\vy) \in \R^n \times \R^n \to \wcinn{k}(\vx,\vy) &:= 2^{-k/2} (\scinn{k}(\vx,\vy)- \scinn{k-1}(\vx,\vy)).
			\end{aligned}
		\end{equation}
        Often $\vy \in \R^n$ is considered a parameter and we write synonymously
			\begin{equation}\label{eq:Sa}
		\begin{aligned}
			\vx \to \scinn{k,\vy}(\vx) = \scinn{k}(\vx,\vy) \text{ and }
			\vx \to \wcinn{k,\vy}(\vx) = \wcinn{k}(\vx,\vy).
		\end{aligned}
	\end{equation}
    In particular, this notation means that $\scinn{k,\vy}$ and $\wcinn{k,\vy}$ are considered solely functions of the variable $\vx$.

	We consider the following subset of {\bf RQNN}s
	\begin{equation} \label{eq:fpcinnd}
		\mathcal{S}^C_d := \set{S^C_{k,\vy}: k \in \Z \text{ and } \vy \in 2^{-k/n}\Z^n}.
	\end{equation}
This is different to standard universal approximation theorems, where an uncountable number of displacements are considered.
	Moreover, we define the discrete Wavelet space         	
	\begin{equation} \label{eq:fpwinnd}
		\mathcal{W}^C_d :=
		\set{\wcinn{k,\vy} := 2^{-k/2} \left(\scinn{k,\vy}-\scinn{k-1,\vy}\right) :
			k \in \Z \text{ and } \vy \in 2^{-k/n}\Z^n}.
	\end{equation}
\end{definition}

		According to \autoref{le:WaveletApprox}, in order to prove the following approximation result, we only require to verify that $\mathcal{S}_d^C$ satisfies the \textcolor{blue}{conditions for} symmetric AtI and the double Lipschitz condition (see \autoref{def:wavelet}), which originate from \cite[Def. 3.4]{DenHan09}.

		\begin{corollary}$\mathcal{W}_d^C$
			is a frame and for every function $f \in \mathcal{L}^1(\R^n)$ there exists a linear combination of $N$ elements of $\mathcal{W}_d^C$, denoted by $f_N$, satisfying
			\begin{equation} \label{eq:app_general_c}
				\norm{f-f_N}_{L^2} \leq \norm{f}_{\mathcal{L}^1} (N+1)^{-1/2}.
			\end{equation}
	\end{corollary}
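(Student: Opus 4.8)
The plan is to deduce the statement from the general wavelet-frame approximation lemma \autoref{le:WaveletApprox}, which already packages the Deng--Han \emph{approximation to the identity} (AtI) machinery together with the Maurey--Jones--Barron greedy estimate. That lemma asserts that whenever a family of scaling kernels satisfies the symmetric AtI conditions together with the double Lipschitz condition of \autoref{def:wavelet}, the associated wavelets form a frame of $L^2$ and the $N$-term rate \autoref{eq:app_general_c} holds. Thus the entire task reduces to verifying that the circular scaling functions $\scinn{k,\vy}$ from \autoref{eq:S} satisfy precisely these hypotheses; the frame property of $\mathcal{W}_d^C$ and the bound on $\norm{f-f_N}_{L^2}$ then follow at once.

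First I would record the elementary properties of the generating profile $\varphi$ from \autoref{eq:varphi}. Because $\varphi(\vx)=C_n\sigma(r^2-\norm{\vx}^2)$ is radial, the normalization $\int_{\R^n}\varphi(\vx)\,d\vx=1$ holds by the choice of $C_n$ in \autoref{de:cf}. The quadratic argument $r^2-\norm{\vx}^2\to-\infty$ as $\norm{\vx}\to\infty$, combined with the integrability assumption of \autoref{de:cf} and the decay of the discriminatory $\sigma$ in that regime (sigmoidal decay, say), forces $\varphi$ to vanish faster than any needed polynomial rate; differentiability of $\varphi$, together with uniform bounds on $\nabla\varphi$ and $\nabla^2\varphi$, is inherited from $\sigma\in C^2\cap\mathcal{B}^2$ via the chain rule. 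These two inputs---rapid decay and controlled derivatives---are exactly what the AtI estimates consume.

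Next I would verify the AtI conditions for the rescaled kernel $\scinn{k}(\vx,\vy)=2^k\varphi(2^{k/n}(\vx-\vy))$. Substituting $\vu=2^{k/n}(\vx-\vy)$, whose Jacobian is $2^k$, gives $\int_{\R^n}\scinn{k}(\vx,\vy)\,d\vy=\int_{\R^n}\varphi=1$ uniformly in $k$ and $\vx$; radiality yields $\scinn{k}(\vx,\vy)=\scinn{k}(\vy,\vx)$, so the two cancellation conditions coincide and the AtI is automatically \emph{symmetric}. The pointwise size bound and the first-order regularity (Lipschitz) bound then follow from the decay of $\varphi$ and the gradient estimate, respectively, after tracking the correct scale homogeneity carried by the factor $2^{-k/n}$.

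The main obstacle is the remaining double Lipschitz condition, which must control the mixed second difference $[\scinn{k}(\vx,\vy)-\scinn{k}(\vx',\vy)]-[\scinn{k}(\vx,\vy')-\scinn{k}(\vx',\vy')]$ by $\norm{\vx-\vx'}^{\epsilon}\norm{\vy-\vy'}^{\epsilon}$ against the AtI kernel, uniformly in $k$. I would reduce this to a Hölder estimate on the mixed second differences of $\varphi$, obtained from the Hessian bound on $\varphi$ (hence on $\sigma''$) together with the rapid decay; the delicate part is the bookkeeping of the dilation factors $2^{k/n}$ so that all constants are genuinely scale-independent. This verification is precisely what is carried out in \autoref{sec:appendix}. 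With all conditions of \autoref{def:wavelet} in hand, \autoref{le:WaveletApprox} applies verbatim: it produces the frame $\mathcal{W}_d^C$ and the inequality \autoref{eq:app_general_c}, in which the $(N+1)^{-1/2}$ factor is the greedy approximation rate and $\norm{f}_{\mathcal{L}^1}$ bounds the $\ell^1$ mass of the frame coefficients of $f$.
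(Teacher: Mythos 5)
Your proposal is correct and follows essentially the same route as the paper: reduce the corollary to \autoref{le:WaveletApprox} and verify that the circular kernels $\scinn{k}$ form a symmetric AtI satisfying the double Lipschitz condition, which is exactly what the paper does via \autoref{le:SConditions} (with the geometric bookkeeping in \autoref{lem:kk} and the Hessian estimate of \autoref{lem:hessian}). The only minor difference is that the paper imposes the explicit decay hypothesis \autoref{eq:sigma0} on $\sigma$ rather than deriving polynomial decay from sigmoidal behaviour, but this does not change the structure of the argument.
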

	For proving the approximation to identity, AtI, property of the functions $\scinn{k}$, $k \in \Z$, we use the following basic inequality.
	\begin{lemma}\label{lem:hessian}
		Let $h: \R^n \to \R$ be a twice differentiable function, which can be expressed in the following way:
		\begin{equation*}
			h(\vx)=h_s(\norm{\vx}^2) \text{ for all } \vx \in \R^n.
		\end{equation*}
		Then the spectral norm of the Hessian of $h$ can be estimated as follows:\footnote{In the following $\nabla$ and $\nabla^2$ (without subscripts) always denote derivatives with respect to an $n$-dimensional variable such as $\vx$. ${}'$ and ${}''$ denotes derivatives of a one-dimensional function.}
		\begin{equation} \label{eq:hessian}
			\norm{\nabla^2 h(\vx)} \leq \max \set{\abs{4\norm{\vx}^2 h_s''(\norm{\vx}^2)+ 2h_s'(\norm{\vx}^2)},\abs{2h_s'(\norm{\vx}^2)}}.
		\end{equation}
	\end{lemma}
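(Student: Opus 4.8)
The plan is to compute the Hessian of $h$ explicitly via the chain rule and then recognize it as a rank-one perturbation of a scalar multiple of the identity, whose eigenvalues can be read off directly. The spectral norm then follows immediately, since for a symmetric matrix it equals the largest modulus of its eigenvalues.

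First I would set $r := \norm{\vx}^2 = \sum_{i=1}^n x_i^2$, so that $h(\vx) = h_s(r)$. Differentiating once gives $\partial_i h = 2 x_i\, h_s'(r)$, and differentiating a second time yields
\begin{equation*}
	\partial_i \partial_j h(\vx) = 2\delta_{ij}\, h_s'(r) + 4 x_i x_j\, h_s''(r).
\end{equation*}
In matrix form this reads
\begin{equation*}
	\nabla^2 h(\vx) = 2 h_s'(\norm{\vx}^2)\, \mathcal{I} + 4 h_s''(\norm{\vx}^2)\, \vx\vx^T,
\end{equation*}
where $\mathcal{I}$ denotes the $n\times n$ identity matrix.

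The next step exploits the structure of this matrix. It is symmetric, so its spectral norm equals the maximum absolute value of its eigenvalues. The rank-one matrix $\vx\vx^T$ has the single nonzero eigenvalue $\norm{\vx}^2$ with eigenvector $\vx$, and eigenvalue $0$ on the orthogonal complement of $\vx$. Hence the eigenvalues of $\nabla^2 h(\vx)$ are
\begin{equation*}
	2 h_s'(\norm{\vx}^2) \;\; (\text{multiplicity } n-1) \quad\text{and}\quad 2 h_s'(\norm{\vx}^2) + 4\norm{\vx}^2 h_s''(\norm{\vx}^2) \;\; (\text{multiplicity } 1).
\end{equation*}
Taking the maximum of the moduli of these two eigenvalues reproduces exactly the right-hand side of \autoref{eq:hessian}.

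There is essentially no hard step here: everything is a direct application of the chain rule followed by the eigenvalue computation for a scaled-identity-plus-rank-one matrix. The only point requiring mild care is to recall that the spectral norm of a symmetric matrix is the maximum \emph{modulus} of its eigenvalues rather than the maximum eigenvalue, which is precisely why the bound is stated with absolute values. I would also note in passing that the stated inequality in fact holds with equality, though only the upper bound is needed later for verifying the approximation-to-the-identity estimates.
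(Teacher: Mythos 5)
Your proposal is correct and follows essentially the same route as the paper: compute the Hessian entries via the chain rule, recognize $\nabla^2 h = 2h_s'\,\mathcal{I} + 4h_s''\,\vx\vx^T$ as a scaled identity plus a rank-one term, and read off the eigenvalues $2h_s'$ (multiplicity $n-1$) and $2h_s' + 4\norm{\vx}^2 h_s''$. If anything, your direct identification of the eigenvalues is slightly cleaner than the paper's intermediate eigenvalue equation, which formally divides by $4h_s''$ and would need a separate (trivial) remark when $h_s''(\norm{\vx}^2)=0$; your version covers that case automatically.
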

	\begin{proof}
		Since $\nabla^2 h(\vx)$ is a symmetric matrix, its operator norm is equal to its spectral radius, namely the largest absolute value of an eigenvalue. By routine calculation we can see that
		\[
		\nabla_{x_i x_j}h(\vx)=4x_ix_jh_s''(\norm{\vx}^2) + 2\delta_{ij}h_s'(\norm{\vx}^2).
		\]
		\begin{equation*}
			4 h_s''(\norm{\vec{x}}^2) C \vz = (-2 h_s'(\norm{\vx}^2) + \lambda)\vz.
		\end{equation*}
		Or in other words $\frac{- 2 h_s'(\norm{\vx}^2) + \lambda}{4 h_s''(\norm{x}^2)}$ is an eigenvalue of $C$. Moreover, $C = \vx \vx^T$ is a rank one matrix and thus the spectral values are $0$ with multiplicity $(n-1)$ and $\norm{\vx}^2$. This in turn shows that the eigenvalues of the Hessian
		are $+2h_s'(\norm{\vx}^2)$ (with multiplicity $n-1$) and $4\norm{\vx}^2 h_s''(\norm{\vx}^2)+2h_s'(\norm{\vx}^2)$, which proves \autoref{eq:hessian}.
		
	\end{proof}
	
	In the following lemma, we will prove that the kernels $(\scinn{k})_{k \in \Z}$ are an AtI.
	\begin{lemma}\label{le:SConditions} Let $r>0$ be fixed.
		Suppose that the activation function $\sigma:\R \to \R$ is monotonically increasing
		and satisfies for the $i$-th derivative ($i=0,1,2$)
		\begin{equation} \label{eq:sigma0}
			\abs{\sigma^i (r^2-t^2)} \leq C_\sigma (1+\abs{t}^n)^{-1-{(2i+1)/n}} \text{ for all } t \in \R.
		\end{equation}
		Then the kernels $(\scinn{k})_{k \in \Z}$ as defined in \autoref{eq:S} form an AtI as defined in \autoref{def:wavelet} that also satisfy the double Lipschitz condition  \autoref{eq:DoubleLipschitzCondition}.
	\end{lemma}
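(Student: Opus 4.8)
The plan is to exploit the convolution structure of the kernels and reduce every requirement in \autoref{def:wavelet} to pointwise decay bounds on $\varphi$ and its first two derivatives. Writing $\delta_k := 2^{-k/n}$, the definition \autoref{eq:S} gives $\scinn{k}(\vx,\vy) = \delta_k^{-n}\varphi(\delta_k^{-1}(\vx-\vy))$, so each $\scinn{k}$ is a radial, translation-invariant mollifier at scale $\delta_k$; in particular it depends on $\vx-\vy$ alone. Consequently the size, regularity and cancellation conditions of the AtI, as well as the double Lipschitz condition \autoref{eq:DoubleLipschitzCondition}, all follow once I establish the scale-free estimates $\abs{\nabla^j\varphi(\vx)} \le C(1+\norm{\vx})^{-n-1-j}$ for $j=0,1,2$ together with the normalization $\int_{\R^n}\varphi=1$. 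The prefactor $\delta_k^{-n}$ and the chain rule then reproduce exactly the scale weights $\delta_k^{\varepsilon}/(\delta_k+\norm{\vx-\vy})^{n+\varepsilon}$ (with $\varepsilon=1$) demanded by the definition.

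First I would derive the three decay estimates. For $j=0$ I substitute $t=\norm{\vx}$ into \autoref{eq:sigma0} with $i=0$, giving $\abs{\varphi(\vx)} = C_n\abs{\sigma(r^2-\norm{\vx}^2)} \lesssim (1+\norm{\vx}^n)^{-1-1/n} \lesssim (1+\norm{\vx})^{-n-1}$. For $j=1$, from $\nabla\varphi(\vx) = -2C_n\sigma'(r^2-\norm{\vx}^2)\vx$ the $i=1$ bound yields $\abs{\nabla\varphi(\vx)} \lesssim \norm{\vx}(1+\norm{\vx}^n)^{-1-3/n} \lesssim (1+\norm{\vx})^{-n-2}$, the extra factor $\norm{\vx}$ being absorbed by the stronger decay. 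For $j=2$ I apply \autoref{lem:hessian} with $h_s(u) = C_n\sigma(r^2-u)$, so $h_s'(u) = -C_n\sigma'(r^2-u)$ and $h_s''(u)=C_n\sigma''(r^2-u)$; the resulting estimate for $\norm{\nabla^2\varphi(\vx)}$ is a maximum of terms of the form $\norm{\vx}^2\abs{\sigma''}$ and $\abs{\sigma'}$, which by the $i=1,2$ bounds are both $\lesssim (1+\norm{\vx})^{-n-3}$. The point to stress is that the exponents $-1-(2i+1)/n$ in \autoref{eq:sigma0} are calibrated precisely so that each differentiation in $\vx$ buys one additional power of decay.

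With these estimates in hand the remaining conditions are routine. The size condition is immediate from the $j=0$ estimate after rescaling by $\delta_k$. The two cancellation conditions $\int_{\R^n}\scinn{k}(\vx,\vy)\,d\vy = \int_{\R^n}\scinn{k}(\vx,\vy)\,d\vx = 1$ follow from $\int_{\R^n}\varphi=1$ (the defining property of $C_n$) by the substitution $\vu=\delta_k^{-1}(\vx-\vy)$. The H\"older regularity in each variable (with exponent $\varepsilon=1$) I would obtain from the mean value theorem applied to $\vz\mapsto\delta_k^{-n}\varphi(\delta_k^{-1}\vz)$ along the segment joining the two evaluation points; since the admissible increment is bounded by $\tfrac12(\delta_k+\norm{\vx-\vy})$, the argument stays in a region where $\norm{\cdot}$ is comparable to $\norm{\vx-\vy}$, and the $j=1$ estimate integrates to the required weight. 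Regularity in the second variable is equivalent by the symmetry $\varphi(-\vx)=\varphi(\vx)$, which makes $\scinn{k}$ symmetric in its two arguments.

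The main obstacle is the double Lipschitz condition \autoref{eq:DoubleLipschitzCondition}. Here I would write the mixed second difference $[\scinn{k}(\vx,\vy)-\scinn{k}(\vx,\vy')] - [\scinn{k}(\vx',\vy)-\scinn{k}(\vx',\vy')]$ as a double integral of the Hessian of $\vz\mapsto\delta_k^{-n}\varphi(\delta_k^{-1}\vz)$ over the parallelogram spanned by $\vx-\vx'$ and $\vy-\vy'$, so that its modulus is controlled by $\norm{\vx-\vx'}\,\norm{\vy-\vy'}$ times the supremum of $\norm{\nabla^2(\delta_k^{-n}\varphi(\delta_k^{-1}\cdot))}$ over that region. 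The $j=2$ estimate, supplied by \autoref{lem:hessian}, then delivers exactly the product structure and the denominator power $n+2$ (that is, $n+2\varepsilon$ with $\varepsilon=1$) required by \autoref{def:wavelet}. The delicate part throughout this step is the bookkeeping of comparabilities: one must use the smallness hypotheses $\norm{\vx-\vx'},\norm{\vy-\vy'}\le\tfrac12(\delta_k+\norm{\vx-\vy})$ to guarantee that every argument appearing in the integral stays at distance comparable to $\norm{\vx-\vy}$ from the origin, so that the decay bounds may be evaluated at a single reference scale without losing the sharp exponents.
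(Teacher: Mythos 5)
Your proposal follows essentially the same route as the paper's proof: it reduces everything to the decay estimates for $\varphi$, $\nabla\varphi$ and $\nabla^2\varphi$ derived from \autoref{eq:sigma0}, bounds the Hessian of the radial function via \autoref{lem:hessian}, applies the first- and second-order mean value theorem for the regularity and double Lipschitz conditions, and uses the smallness restrictions \autoref{eq:rest_item2} and \autoref{eq:rest_item3} to keep all intermediate arguments comparable to $\norm{\vx-\vy}$ (the step the paper isolates as \autoref{lem:kk}). The only difference is cosmetic: you work with the Euclidean normalization $(\delta_k+\norm{\vx-\vy})^{n+\varepsilon}$ with $\varepsilon=1$, which is equivalent up to dimensional constants to the quasi-metric form $\left(2^{-k}+\norm{\vx-\vy}^n\right)^{1+\epsilon}$ with $\epsilon=1/n$ used in \autoref{def:wavelet}.
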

	
	\begin{proof}
		We verify the three conditions from \autoref{def:wavelet} as well as \autoref{eq:DoubleLipschitzCondition}.
		First of all, we note that
		\begin{equation} \label{eq:sigma1}
			\abs{\sigma^i (r^2 - \norm{\vx}^2)} \leq C_\sigma (1+\norm{\vx}^n)^{-1-{(2i+1)/n}} \text{ for all } \vx \in \R^n.
		\end{equation}
		\begin{itemize}
			\item \textbf{Verification of \autoref{it1} in \autoref{def:wavelet}:}
			\autoref{eq:varphi} and \autoref{eq:sigma0} imply that
			\begin{equation}\label{eq:VarphiBound}
				0 \leq \varphi(\vx-\vy) = C_n \sigma(r^2-\norm{\vx-\vy}^2) \leq C_\sigma C_n (1+\norm{\vx-\vy}^n)^{-1-1/n} \text{ for all } \vx,\vy \in \R^n.
			\end{equation}
			Therefore
			\begin{equation*} \begin{aligned}
					\scinn{k}(\vx,\vy)&= 2^k \varphi(2^{k/n}(\vx-\vy))
					\leq C_\sigma C_n 2^{k}(1+2^k \norm{\vx-\vy}^n)^{-1-1/n} \\
					&= C_\sigma C_n 2^{-k/n}(2^{-k}+ \norm{\vx-\vy}^n)^{-1-1/n}.
			\end{aligned} \end{equation*}
			Thus \autoref{it1} in \autoref{def:wavelet} holds with $\epsilon=1/n$ and $C_\rho=1$ and $C=C_n C_\sigma$.
			\item \textbf{Verification of \autoref{it2} in \autoref{def:wavelet} with $C_\rho=1$ and $C_A=2^{-n}$:}
			Because $\sigma$ is monotonically increasing it follows from \autoref{eq:varphi} and the fact that $S_{0}(\vec{x}, \vec{y})=\varphi(\vec{x}-\vec{y})$  (see \autoref{eq:S}) and \autoref{de:cf} that
			$$F_\vy(\vx):= \norm{\nabla_\vx (S_0(\vx,\vy))} = 2 C_n \norm{\vx-\vy}\sigma'(r^2-\norm{\vx-\vy}^2) \text{ for all } \vy \in \R^n.$$
			Then \autoref{eq:sigma1} implies that
			\begin{equation*}
				\begin{aligned}
					F_\vy(\vx) &\leq 2C_n C_\sigma (1+\norm{\vx-\vy}^n)^{-1-3/n} \norm{\vx-\vy} \\& \leq
					2C_n C_\sigma (1+\norm{\vx-\vy}^n)^{-1-3/n} (1+\norm{\vx-\vy}^n)^{\frac{1}{n}}\\
					& = 2C_n C_\sigma (1+\norm{\vx-\vy}^n)^{-1-{2/n}}.
				\end{aligned}
			\end{equation*}
			From the definition of $\scinn{k}(\vx,\vy)$, it follows
			\begin{equation} \label{eq:zw}
				\begin{aligned}
					\norm{\nabla_\vx (\scinn{k}(\vx,\vy))} &= \norm{\nabla_\vx (2^k\varphi(2^{{k/n}}(\vx-\vy)))} =
					2^k \norm{\nabla_\vx S_0 (2^{{k/n}}\vx,2^{{k/n}}\vy)} \\
					&= 2^{k+{k/n}}F_{2^{{k/n}}\vy} (2^{{k/n}}\vx)\\
					&\leq 2^{-k/n} C_n C_\sigma (2^{-k}+\norm{\vx-\vy}^n)^{-1-{2/n}}.
				\end{aligned}
			\end{equation}
			From the mean value theorem it therefore follows from \autoref{eq:zw} and \autoref{eq:kk} that
			\begin{equation} \label{eq:help}
				\begin{aligned}
					~ & \frac{\abs{\scinn{k}(\vx,\vy) - \scinn{k}(\vx',\vy)}}{\norm{\vx-\vx'}} \\
					\leq &
					\max_{\set{\vz = t \vx' + (1-t)\vx:t \in [0,1]}} \norm{\nabla_\vx(\scinn{k}(\vz,\vy))}\\
					\leq &2^{-k/n} C_n C_\sigma \max_{\set{\vz = \vx + t (\vx' -\vx):t \in [0,1]}} (2^{-k}+\norm{\vz-\vy}^n)^{-1-{2/n}} \\
					= & 2^{-k/n} C_n C_\sigma \left(2^{-k} + \min_{\set{\vz = \vx + t (\vx' -\vx):t \in [0,1]}} \norm{\vz-\vy}^n \right)^{-1-{2/n}}.
				\end{aligned}
			\end{equation}
			Then application of \autoref{eq:kk} and noting that $\frac{1-2^{-n}}{2^{-n}} \geq 1$
			gives
			\begin{equation*}
				\begin{aligned}
					~ & \frac{\abs{\scinn{k}(\vx,\vy) - \scinn{k}(\vx',\vy)}}{\norm{\vx-\vx'}} \\
					\leq &
					2^{-k/n} C_n C_\sigma \left((1-2^{-n}) 2^{-k}+2^{-n}\norm{\vx-\vy}^n  \right)^{-1-{2/n}}\\ \leq &
					2^{-k/n} \left(2^{-n}\right)^{-1-{2/n}}
					C_n C_\sigma \left(\frac{(1-2^{-n}) }{2^{-n}}2^{-k} + \norm{\vx-\vy}^n  \right)^{-1-{2/n}}\\
					\leq &
					2^{-k/n} 2^{n+2}
					C_n C_\sigma \left(2^{-k} + \norm{\vx-\vy}^n  \right)^{-1-{2/n}}.
				\end{aligned}
			\end{equation*}
			Therefore \autoref{it2} is satisfied with $C_\rho=1$, $\zeta=1/n$, $\epsilon=1/n$ and
			$C=2^{n+2} C_n C_\sigma $.
			
			\item \textbf{Verification of \autoref{it4} in \autoref{def:wavelet}:}
			From the definition of $\scinn{k}$ (see \autoref{eq:S}) it follows that for every $k\in \Z$ and $\vy \in \R^n$
			\begin{equation*}
				1 = \int_{\R^n}\scinn{k}(\vx,\vy)d\vx = \int_{\R^n} 2^k \varphi(2^{k/n}(\vx-\vy)) d\vx.
			\end{equation*}
			\item \textbf{Verification of the double Lipschitz condition \autoref{eq:DoubleLipschitzCondition} in \autoref{def:wavelet}:}
			By using the integral version of the mean value theorem, we have
			\begin{align*}
				&\scinn{k}(\vx,\vy) - \scinn{k}(\vx',\vy) - \scinn{k}(\vx,\vy') + \scinn{k}(\vx',\vy') \\
				&= \scinn{k}(\vx,\vy) - \scinn{k}(\vx',\vy) - (\scinn{k}(\vx,\vy') - \scinn{k}(\vx',\vy'))\\
				&= \int_{0}^{1} \langle \nabla_\vy \scinn{k}(\vx,\vy' + t(\vy-\vy')), \vy-\vy' \rangle dt \\
				& \qquad - \int_{0}^{1} \langle \nabla_\vy \scinn{k}(\vx',\vy' + t(\vy-\vy')), \vy-\vy' \rangle dt\\
				&= \int_{0}^{1}\int_{0}^{1} \langle \nabla_{\vx, \vy} \scinn{k}(\vx' + s(\vx-\vx'),\vy' + t(\vy-\vy'))(\vx-\vx'), \vy-\vy' \rangle dt ds.
			\end{align*}
			Following this identity, we get
			\begin{equation} \label{eq:lip2}
				\begin{aligned}
					~ & \frac{\abs{\scinn{k}(\vx,\vy) - \scinn{k}(\vx',\vy) + \scinn{k}(\vx,\vy') - \scinn{k}(\vx',\vy')}}
					{\norm{\vx-\vx'} \norm{\vy-\vy'}}\\
					\leq &  \max_{\interval}
					\norm{\nabla_\vy \left(\frac{\scinn{k}(\vx,\vz) - \scinn{k}(\vx',\vz)}{\norm{\vx-\vx'}}\right)}
					\leq  \max_{\quader} \norm{\nabla^2_{\vx\vy}\scinn{k}(\vz',\vz)},
				\end{aligned}
			\end{equation}
			where
			\begin{equation*}
				\begin{aligned}
					\interval &:= \set{\vz=t\vy+(1-t)\vy':t \in [0,1]}, \\
					\quader &:= \set{(\vz=t_{\vy}\vy+(1-t_{\vy})\vy',\vz'=t_{\vx}\vx+(1-t_{\vx})\vx'):
						t_{\vy} \in [0,1],t_{\vx} \in [0,1]},
				\end{aligned}
			\end{equation*}
			and $\norm{\nabla^2_{\vx\vy}\scinn{k}(\vz',\vz)}$ denotes again the spectral norm of $\nabla^2_{\vx\vy}\scinn{k}(\vz',\vz)$.
			
			Now, we estimate the right hand side of \autoref{eq:lip2}: From the definition of $\scinn{k}$, \autoref{eq:S}, and the definition of $\varphi$, \autoref{eq:varphi}, it follows with the abbreviation
			$\vec{\omega} = 2^{{k/n}}(\vz'-\vz)$:
			\begin{equation*}
				\begin{aligned}
					\norm{\nabla^2_{\vx\vy}\scinn{k}(\vz',\vz)} &=
					2^{k}\norm{\nabla^2_{\vx\vy}(\varphi \circ (2^{{k/n}}\cdot))(\vz'-\vz))}
					= 2^{k(1+2/n)}\norm{\nabla^2 \varphi (\vec{\omega}))}.
				\end{aligned}
			\end{equation*}
			Applications of \autoref{lem:hessian} with $\vx \to h(\vx)=\varphi(\vx)$ and $$t \to h_s(t) = C_n \sigma(r^2-t)$$ shows that (note that $h_s'(t)=-C_n \sigma'(r^2-t)$)
			\begin{equation}\label{eq:last}
				\begin{aligned}
					&\norm{\nabla^2 \varphi (\vec{\omega})} \\
					\leq & C_n  \max \set{\abs{4\norm{\vec{\omega}}^2 \sigma''(r^2 - \norm{\vec{\omega}}^2) -  2\sigma'(r^2 - \norm{\vec{\omega}}^2)},\abs{2\sigma'(r^2 - \norm{\vec{\omega}}^2)}}\\
					\leq & 2^2 C_n  \max \set{2\norm{\vec{\omega}}^2 \abs{\sigma''(r^2 - \norm{\vec{\omega}}^2)},\abs{\sigma'(r^2 - \norm{\vec{\omega}}^2)}}.
				\end{aligned}
			\end{equation}
			Thus from \autoref{eq:sigma1} it follows that
			\begin{equation*}
				\begin{aligned}
					\norm{\nabla^2_{\vx\vy}\scinn{k}(\vz',\vz)}
					\leq & 2^2 2^{k(1+2/n)}C_n C_\sigma  \max \set{2\norm{\vec{\omega}}^2 (1+\norm{\vec{\omega}}^n)^{-1-{5/n}}, (1+\norm{\vec{\omega}}^n)^{-1-3/n} }\\
					\leq & 2^3 2^{k(1+2/n)}C_n C_\sigma  (1+\norm{\vec{\omega}}^n)^{-1-3/n}\\
					\leq & 2^{-k/n} 2^3 C_n C_\sigma  (2^{-k} + \norm{ \vz'-\vz}^n)^{-1-3/n} .
				\end{aligned}
			\end{equation*}
			In the next step we note that from \autoref{eq:kk1} it follows that
			\begin{equation}\label{eq:min}
				\norm{\vz'-\vz}^n \geq 3^{-n} \norm{\vx-\vy}^n - 3^{-n} 2^{1-k}.
			\end{equation}
			
			Thus we get because $\frac{1-3^{-n}2}{3^{-n}} \geq 1$
			\begin{equation*}
				\begin{aligned}
					~ & \frac{\abs{\scinn{k}(\vx,\vy) - \scinn{k}(\vx',\vy) - \scinn{k}(\vx,\vy')  + \scinn{k}(\vx',\vy')}}
					{\norm{\vx-\vx'} \norm{\vy-\vy'}} \\
					\leq & 2^{-k/n} 2^3 C_n C_\sigma \left((1-3^{-n}2) 2^{-k} + 3^{-n}\norm{\vx-\vy}^n \right)^{-1-3/n}\\
					\leq & 2^{-k/n} 2^3 (3^{-n})^{-1-3/n} C_n C_\sigma
					\left(\frac{1-3^{-n}2}{3^{-n}} 2^{-k} + \norm{\vx-\vy}^n \right)^{-1-3/n}\\
					\leq & 2^{-k/n} 2^3 3^{n+3} C_n C_\sigma
					\left(2^{-k} + \norm{\vx-\vy}^n \right)^{-1-3/n}.
				\end{aligned}
			\end{equation*}
			Therefore \autoref{it4} is satisfied with $C_\rho=1$,
			$\tilde{C} = 2^3 3^{n+3} C_n C_\sigma$, $\zeta=1/n$, and $\epsilon=1/n$.
		\end{itemize}
	\end{proof}

		We have shown with \autoref{le:SConditions} that the functions $\set{\scinn{k}: k \in \Z}$ form an AtI, which satisfies the double Lipschitz condition (see \autoref{def:wavelet}). 
        Moreover, for activations functions, like the sigmoid function, \autoref{eq:sigma0}
        it follows that 
		\begin{equation} \label{eq:sigma0p}
			\lim_{t \to \pm \infty} (1+\abs{t}^n)^{1+{(2i+1)/n}} \abs{\sigma^i (r^2-t^2)} = 0,
		\end{equation}
         which shows \autoref{eq:sigma}. The limit result is an easy consequence of the fact that $t\to \e^{-t^2}$ and its derivative are faster decaying to $zero$ than the functions $t \to (1+\abs{t}^n)^{-1-{(2i+1)/n}}$.
Therefore, it follows from \autoref{le:WaveletApprox} that the set $\mathcal{W}_d^C$ is a wavelet frame and satisfies the estimate
		\autoref{eq:app_general}. We summarize this important result:

	\begin{theorem}[$\mathcal{L}^1$-convergence of radial wavelets] \label{thm:ConvR}
		Let $\sigma$ be an activation function that satisfies the conditions in \autoref{le:SConditions}.
		Then, for every function $f\in \mathcal{L}^1(\R^n)$ (see \autoref{de:ell1space} for a definition of the space) and every
		$N \in \N$, there exists a function
		\begin{equation*}
				f_N \in \text{span}_N(\mathcal{W}^C_d) \subseteq \mathcal{L}^1(\R^n),
		\end{equation*}
	 and $\text{span}_N$ denotes linear combinations of at most $N$ terms in the set, such that
		\begin{equation}
			\label{eq:app_error}
			\norm{f-f_N}_{L^2} \leq \norm{f}_{\mathcal{L}^1} (N+1)^{-1/2}.
		\end{equation}
	\end{theorem}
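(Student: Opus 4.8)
The plan is to treat \autoref{thm:ConvR} as the final assembly of the machinery built up in this section, so that essentially all of the analytic labor has already been discharged by the preceding lemmas and the theorem itself is a summary. First I would observe that the hypotheses imposed on $\sigma$ are, by construction, identical to the hypotheses of \autoref{le:SConditions}; hence I may invoke that lemma directly to conclude that the radial scaling kernels $(\scinn{k})_{k \in \Z}$ from \autoref{eq:S} form a symmetric approximation to the identity (AtI) in the sense of \autoref{def:wavelet}, and moreover satisfy the double Lipschitz condition \autoref{eq:DoubleLipschitzCondition}. This is the step in which the algebraic decay conditions \autoref{eq:sigma0} on $\sigma$ and its first two derivatives are consumed (via the Hessian bound of \autoref{lem:hessian} for the double Lipschitz part), and it carries the genuine content of the result.

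Next I would feed this structural information into the abstract frame-and-approximation result \autoref{le:WaveletApprox}, which is the engine that converts the AtI-plus-double-Lipschitz property of the scaling kernels into a statement about the associated discrete wavelet system $\mathcal{W}_d^C$ defined in \autoref{eq:fpwinnd}. Concretely, \autoref{le:WaveletApprox} returns two outputs: that $\mathcal{W}_d^C$ is a (not necessarily orthonormal) frame for the square integrable functions, and that this frame admits a greedy $N$-term approximation bound of the form \autoref{eq:app_general_c}. Because the hypotheses of \autoref{le:WaveletApprox} are precisely the conclusions just obtained from \autoref{le:SConditions}, this application is immediate and requires no further estimation.

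Finally I would specialize the resulting estimate to the $\mathcal{L}^1(\R^n)$-norm of \autoref{de:ell1space}. Since $\norm{f}_{\mathcal{L}^1}$ is by definition the infimum of the $\ell^1$-norms of wavelet coefficient sequences representing $f$, it is exactly the quantity that controls best $N$-term (greedy) approximation in a frame, and the associated rate is the Maurey--Jones--Barron-type bound $(N+1)^{-1/2}$. Choosing $f_N \in \text{span}_N(\mathcal{W}_d^C)$ to be the greedy approximant delivered by \autoref{le:WaveletApprox} then yields \autoref{eq:app_error} with constant $1$, which is the claim.

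I do not anticipate a real obstacle at the level of \autoref{thm:ConvR} itself, precisely because it is a packaging of earlier results; the difficulty lives upstream, in \autoref{le:SConditions} (matching the decay exponents of $\sigma^{(i)}$ to the scaling exponents demanded by \autoref{def:wavelet}) and in the abstract theory behind \autoref{le:WaveletApprox}. The one point that warrants care is bookkeeping: verifying that the dyadic dilation $2^{k/n}$ and the lattice translations $\vy \in 2^{-k/n}\Z^n$ used to define $\mathcal{S}_d^C$ and $\mathcal{W}_d^C$ are compatible with the normalization under which \autoref{le:WaveletApprox} produces the clean constant-$1$, rate-$(N+1)^{-1/2}$ estimate, rather than an uncontrolled frame constant.
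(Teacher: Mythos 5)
Your proposal is correct and follows essentially the same route as the paper: the paper's own justification of \autoref{thm:ConvR} consists precisely of invoking \autoref{le:SConditions} to establish that the kernels $(\scinn{k})_{k\in\Z}$ form a symmetric AtI satisfying the double Lipschitz condition, and then applying \autoref{le:WaveletApprox} to conclude that $\mathcal{W}_d^C$ is a frame obeying the $N$-term bound \autoref{eq:app_general}. Your closing remark about verifying compatibility of the dilation/translation normalization with the constant-$1$ estimate is a fair caution, but it concerns the cited result \autoref{le:WaveletApprox} rather than anything the paper re-derives here.
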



     Using \autoref{thm:ConvR} we are able to formulate the main result of this paper:
	\begin{corollary}[$\mathcal{L}^1$-convergence of {\bf RQNN}s] \label{co:ConvR}
	Let the assumptions of \autoref{thm:ConvR} be satisfied.
	Then, for every function $f\in \mathcal{L}^1(\R^n)$ (see \autoref{de:ell1space}) and every
	$N \in \N$, there exists a parametrization vector
	\begin{equation*}
		\vec{p} = [\alpha_1,\ldots,\alpha_{2N};{\bf w}_1,\ldots, {\bf w}_{2N}; \xi_1,\ldots,\xi_{2N};\theta_1,\ldots,\theta_{2N}].
	\end{equation*}
	such that
	\begin{equation}
		\label{eq:app_error II}
		\norm{f-\Psi[\vp]}_{L^2} \leq \norm{f}_{\mathcal{L}^1} (N+1)^{-1/2},
	\end{equation}
	where $\Psi[\vp]$ is a {\bf RQNN} from \autoref{eq:radial_approximation}.
\end{corollary}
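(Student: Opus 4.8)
The plan is to transfer the wavelet-frame approximation guaranteed by \autoref{thm:ConvR} verbatim into the language of RQNN parametrizations, exploiting that every element of $\mathcal{W}^C_d$ unfolds into exactly two neurons of the form \autoref{eq:radial_approximation}. \autoref{thm:ConvR} provides, for the given $f\in\mathcal{L}^1(\R^n)$ and $N\in\N$, a function $f_N=\sum_{i=1}^{N}c_i\,\wcinn{k_i,\vy_i}$ (a linear combination of at most $N$ frame elements, with $c_i\in\R$, $k_i\in\Z$ and $\vy_i\in 2^{-k_i/n}\Z^n$) satisfying $\norm{f-f_N}_{L^2}\le\norm{f}_{\mathcal{L}^1}(N+1)^{-1/2}$. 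The whole task reduces to rewriting $f_N$ as $\Psi[\vp]$ for a suitable $\vp$ with $2N$ parameter blocks, after which the $L^2$-bound is inherited without any change.

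First I would use \autoref{eq:fpwinnd} to expand each wavelet as $\wcinn{k_i,\vy_i}=2^{-k_i/2}\left(\scinn{k_i,\vy_i}-\scinn{k_i-1,\vy_i}\right)$, turning $f_N$ into a linear combination of the $2N$ scaling functions $\scinn{k_i,\vy_i}$ and $\scinn{k_i-1,\vy_i}$. Next I would verify that each scaling function is a single RQNN neuron. From \autoref{eq:S} and \autoref{eq:varphi},
\[
\scinn{k,\vy}(\vx)=2^{k}C_n\,\sigma\!\left(r^2-\norm{2^{k/n}(\vx-\vy)}^2\right),
\]
and expanding $\norm{2^{k/n}(\vx-\vy)}^2=2^{2k/n}\left(\norm{\vx}^2-2\vy^T\vx+\norm{\vy}^2\right)$ brings the argument of $\sigma$ into the exact shape $\bw^T\vx+\xi\norm{\vx}^2+\theta$ of \autoref{eq:radial_approximation}, with
\[
\xi=-2^{2k/n},\qquad \bw=2^{1+2k/n}\,\vy,\qquad \theta=r^2-2^{2k/n}\norm{\vy}^2,
\]
and prefactor $2^{k}C_n$. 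Reading off these coefficients for each of the $2N$ scaling functions and absorbing the weights $c_i$, the dyadic factors $2^{-k_i/2}$, and the prefactors $2^{k}C_n$ into the respective $\alpha_j$, I obtain a vector $\vp=[\alpha_1,\ldots,\alpha_{2N};\bw_1,\ldots,\bw_{2N};\xi_1,\ldots,\xi_{2N};\theta_1,\ldots,\theta_{2N}]$ with $\Psi[\vp]=f_N$.

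This is an identification by bookkeeping rather than a genuine analytic obstacle, so the two points deserving attention are both mild. The coefficient $\xi=-2^{2k/n}$ comes out negative; this is admissible since the statement invokes the unconstrained form \autoref{eq:radial_approximation} with $\xi_j\in\R$, not the sign-restricted RQNNs of \autoref{eq:circle}. The index bookkeeping is the second point: at most $N$ wavelets expand into at most $2N$ neurons, matching the $2N$-block parametrization exactly, where any surplus slots are filled with vanishing $\alpha_j$. With $\Psi[\vp]=f_N$ in hand, the claimed estimate \autoref{eq:app_error II} is just \autoref{eq:app_error}.
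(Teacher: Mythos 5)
Your proposal is correct and follows essentially the same route as the paper's own proof: invoke \autoref{thm:ConvR}, split each of the at most $N$ wavelets $\wcinn{k,\vy}$ into its two scaling functions, recognize each $\scinn{k,\vy}$ as a single neuron of the form \autoref{eq:radial_approximation} by expanding $\norm{2^{k/n}(\vx-\vy)}^2$, and absorb all prefactors into the $\alpha_j$. If anything, your bookkeeping is slightly more careful than the paper's (you fully expand $\theta_j=r^2-2^{2k/n}\norm{\vy}^2$ and explicitly address the sign of $\xi_j$, which the paper glosses over), so no changes are needed.
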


    \begin{proof}
    	From \autoref{thm:ConvR} it follows that there exists $f_N \in \text{span}_N(\mathcal{W}^C_d)$, which satisfies \autoref{eq:app_error}. By definition,
    	\begin{equation*}
    		f_N(\vx) = \sum_{k \in \Z} \sum_{\vec{j} \in \Z^n} \chi_{f,N} (k,\vec{j}) \beta_{k,\vec{j}}
    		\wcinn{k,2^{-k/n}\vec{j}}(\vx),
    	\end{equation*}
    	where $\chi_{f,N}$ denotes the characteristic function of a set of indices of size $N$.	
    	Using the definition of $\wcinn{k}$ we get from \autoref{eq:S} and \autoref{eq:varphi}
    	\begin{equation*}\begin{aligned}
    		f_N(\vx) &= \sum_{k \in \Z} \sum_{\vec{j} \in \Z^n} \chi_{f,N} (k,\vec{j}) \beta_{k,\vec{j}} 2^{-k/2}
    		\left(\scinn{k,2^{-k/n}\vec{j}}(\vx)- \scinn{k-1,2^{-k/n}\vec{j}}(\vx)\right)\\
    	&= \sum_{k \in \Z} \sum_{\vec{j} \in \Z^n} \chi_{f,N} (k,\vec{j}) \beta_{k,\vec{j}} 2^{-k/2}
    	\left( 2^k \varphi(2^{k/n}(\vx-2^{-k/n}\vec{j})) -
    	2^{k-1} \varphi(2^{(k-1)/n}(\vx-2^{-k/n}\vec{j})) \right)\\
    	&= \sum_{k \in \Z} \sum_{\vec{j} \in \Z^n} \chi_{f,N} (k,\vec{j}) C_n \beta_{k,\vec{j}} 2^{-k/2} \\
    	& \qquad \left( 2^k \sigma \left(r^2 - \norm{2^{k/n} \left(\vx-2^{-k/n}\vec{j}\right)
    	                                            }^2    	
    	                           \right) -
    	2^{k-1} \sigma \left( r^2 - \norm{2^{(k-1)/n}\left(\vx-2^{-k/n}\vec{j}\right)}^2
    	\right)
  \right).
    \end{aligned}
    	\end{equation*}
    The arguments where $\sigma$ is evaluated are in general different, and thus we get     	\begin{equation*}\begin{aligned}
    		f_N(\vx)
    		&= \sum_{k \in \Z} \sum_{\vec{j} \in \Z^n} \underbrace{\chi_{f,N} (k,\vec{j}) C_n \beta_{k,\vec{j}} 2^{k/2} }_{=\alpha_j}
    		\sigma \left(\underbrace{r^2}_{=\theta_j} \underbrace{- 4^{k/n}}_{=\xi_j} \|\vx-\underbrace{2^{-k/n}\vec{j}}_{=\vy_j}\|^2 \right) \\  	
    		& \qquad +\sum_{k \in \Z} \sum_{\vec{j} \in \Z^n} \underbrace{\chi_{f,N} (k,\vec{j}) C_n \beta_{k,\vec{j}} 2^{k/2-1}}_{\alpha_j}
    		\sigma \left( \underbrace{r^2}_{\theta_j}
    		              \underbrace{- 4^{(k-1)/n}}_{=\xi_j}
    		              \|\vx-\underbrace{2^{-k/n}\vec{j}}_{\vy_j}\|^2
    		\right)
    	\end{aligned}
    \end{equation*}
    from which the coefficients can be read out. Note that for the sake of simplicity we have not explicitly stated the exact index in the subscript of the coefficients $(\vec{\alpha},\vec{\theta},\vec{\xi},\by)$, or in other words we use the formal relation $j=(k,\vec{j})$ to indicate the dependencies. This shows the claim if we identify
    $\theta_j = r^2$ and
    		\begin{equation*}
    				\bw^T = 2^{1+{2k/n}}\vy.
    			\end{equation*}
    \end{proof}

	\begin{remark}\label{rem:NumberOfneurons}
Our results in section 4 are proved using ``similar'' methods as in \cite{ShaCloCoi18}, but can not be used directly. Because the main ingredient of the proof is the construction of \emph{localizing} functions as wavelets. This task requires two layers of affine linear decision functions to accomplish, since the linear combinations of functions of the form $\vx \to \varphi(\vx):= C_n \sigma(\vw^T \vx + \theta)$ are not localized; they \emph{cannot} satisfy \autoref{it4} in \autoref{def:wavelet}, as they are not integrable on $\R^n$. In comparison, some quadratic decision functions - such as those used in Section 4 - naturally gives localized functions, and therefore can work as wavelets on their own.
\bigskip\par\noindent

We mention that there exists a zoo of approximation rates results for neural network functions (see for instance \cite{SieXu22,SieXu23}, where best-approximation results in dependence of the space-dimension $n$ have been proven). We have based our analysis on convergence rates results of \cite{ShaCloCoi18} and \cite{DenHan09}. This choice is motivated by the fact that the AtI property from \cite{DenHan09} is universal, and allows a natural extension of their results to quadratic neural network functions. The motivation here is not to give the optimal convergence rates result for quadratic neural networks, but to verify that convergence rates results are possible. We believe that AtI is a very flexible and elegant tool.
\end{remark}

	\section{Conclusion}\label{sec:conclusions}
	In this paper we studied generalized neural networks functions for solving inverse problems.
	We proved a universal approximation theorem and we have proven convergence rates of radial neural networks,
	which are the same order as the classical affine linear neural network functions, but with vastly reduced numbers of elements
	(in particular it spares one layer). The paper presents a proof of concept and thus we restrict attention only to radial neural networks although generalizations to higher order neural networks (such as cubic) is quite straightforward.
	
	We also make the remark about the convergence of the actual training process of a neural network, i.e. optimization of the parameters of each decision function. This is usually done by gradient descent or Gauss-Newton-type methods. From the analysis in \cite{SchHofNas23} the convergence conditions of Newton's method become very complicated for {\bf DNN}s, but are transparent for
	quadratic neural networks, which is fact a matter of the complicated chain rule, when differentiating {\bf DNN}s with respect to the parameters. For such an analysis there is a clear preference to go higher with the degree of polynomials than increasing the number of layers.
	\appendix
\section{Approximation to the identity (AtI)} \label{sec:appendix}
\begin{definition}[Approximation to the identity \cite{DenHan09}] \label{def:wavelet}
	A sequence of \emph{symmetric kernel functions} $(S_k: \R^n \times \R^n  \to \R)_{k\in \Z}$ is said to be an \emph{approximation to the identity (AtI)} if there exist a quintuple $(\epsilon,\zeta,C,C_\rho,C_A)$ of positive numbers satisfying the additional constraints
	\begin{equation} \label{eq:quin}
		0 < \epsilon \leq \frac{1}{n}, 0 < \zeta \leq \frac{1}{n} \text{ and } C_A < 1
	\end{equation}
	the following three conditions are satisfied for all $k \in \Z$:
	\begin{enumerate}
		\item \label{it1}
		$\abs{S_k(\vx,\vy)} \leq C\frac{2^{-k\epsilon}}{\left(2^{-k}+ C_\rho\norm{\vx-\vy}^n \right)^{1+\epsilon}}$ for all $\vx,\vy \in \R^n$;
		\item \label{it2}
		$\abs{S_k(\vx,\vy) - S_k(\vx',\vy)} \leq C \left( \frac{C_\rho\norm{\vx-\vx'}^n }{2^{-k}+ C_\rho\norm{\vx-\vy}^n }\right)^{\zeta}\frac{2^{-k\epsilon}}{\left(2^{-k}+ C_\rho\norm{\vx-\vy}^n \right)^{1+\epsilon}}$ \\
		for all triples $(\vx,\vx',\vy) \in \R^n \times \R^n \times \R^n$ which satisfy
		\begin{equation} \label{eq:rest_item2}
			C_\rho\norm{\vx-\vx'}^n  \leq C_A \left(2^{-k}+ C_\rho\norm{\vx-\vy}^n \right);
		\end{equation}
		\item \label{it4} $\int_{\R^n} S_k(\vx,\vy) d\vy = 1$ for all $\vx \in \R^n$.
	\end{enumerate}
	Moreover, we say that the AtI satisfies the \emph{double Lipschitz condition} if there exist a triple $(\tilde{C},\tilde{C}_A,\zeta)$ of positive constants satisfying
	\begin{equation} \label{eq:quin2}
		\tilde{C}_A < \frac12,
	\end{equation}
	such that for all $k \in \Z$
	\begin{equation}\label{eq:DoubleLipschitzCondition}
		\begin{aligned}
			&\abs{S_k(\vx,\vy) - S_k(\vx',\vy) - S_k(\vx,\vy') + S_k(\vx',\vy')}\\
			\leq & \tilde{C} \left( \frac{C_\rho\norm{\vx-\vx'}^n }{2^{-k}+C_\rho\norm{\vx-\vy}^n} \right)^\zeta
			\left( \frac{C_\rho \norm{\vy-\vy'}^n}{2^{-k}+C_\rho\norm{\vx-\vy}^n }\right)^\zeta \frac{2^{-k \epsilon}}{(2^{-k}+C_\rho\norm{\vx-\vy}^n )^{1+\epsilon}}
		\end{aligned}
	\end{equation}
	for all quadruples $(\vx,\vx',\vy,\vy')\in \R^n \times \R^n \times \R^n \times \R^n$ which satisfy
	\begin{equation} \label{eq:rest_item3}
		C_\rho \max\set{\norm{\vx-\vx'}^n,\norm{\vy-\vy'}^n} \leq \tilde{C}_A \left( 2^{-k}+C_\rho\norm{\vx-\vy}^n\right).
	\end{equation}
\end{definition}

The conditions \autoref{it2} and \autoref{eq:rest_item3} are essential for our analysis. We characterize now geometric properties of these constrained sets:
\begin{lemma} \label{lem:kk}
	\begin{itemize} Let $C_\rho=1$, $C_A=2^{-n}$ and $\tilde{C}_A=3^{-n}$.
		\item Then set of triples $(\vx,\vx',\vy)$ which satisfy \autoref{eq:rest_item2} and for which $\norm{\vx - \vy}^n \geq 2^{-k}$ and all $t \in[0,1]$ satisfy
		\begin{equation} \label{eq:kk}
			\norm{\vx+t(\vx'-\vx)-\vy}^n \geq 2^{-n}\norm{\vx-\vy}^n - 2^{-n} 2^{-k}.
		\end{equation}
		\item
		The set of quadrupels $(\vx,\vx',\vy,\vy')$ which satisfy \autoref{eq:rest_item3}  and for which $\norm{\vx - \vy}^n \geq 2^{-k}$ satisfy
		\begin{equation} \label{eq:kk1}	
			\norm{\vx+t_\vx(\vx'-\vx)-\vy-t_\vy(\vy'-\vy)}^n\geq 3^{-n} \norm{\vx-\vy}^n - 3^{-n} 2^{1-k}
		\end{equation}
		for all $t_\vx,t_\vy\in[0,1]$.
	\end{itemize}
\end{lemma}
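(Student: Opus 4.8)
The plan is to reduce both inequalities to the elementary concavity of the map $u\mapsto u^{1/n}$ on $[0,\infty)$, after first discarding everything geometric with a single application of the triangle inequality. Throughout I abbreviate $s=2^{-k}$, $R=\norm{\vx-\vy}$, and (for the first item) $d=\norm{\vx-\vx'}$. With $C_\rho=1$ and $C_A=2^{-n}$, the hypothesis \autoref{eq:rest_item2} reads $d^n\le 2^{-n}(s+R^n)$, i.e. $d\le \tfrac12(s+R^n)^{1/n}$. For any $t\in[0,1]$ the point $\vz=\vx+t(\vx'-\vx)$ satisfies $\vz-\vy=(\vx-\vy)+t(\vx'-\vx)$, so the triangle inequality gives $\norm{\vz-\vy}\ge R-d\ge 0$. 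Since $R^n\ge s$ the target $2^{-n}(R^n-s)$ is nonnegative, and because $u\mapsto u^n$ is increasing on $[0,\infty)$ it suffices to prove $R-d\ge \tfrac12(R^n-s)^{1/n}$.

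This last inequality is where the only real idea enters. Using $d\le\tfrac12(s+R^n)^{1/n}$ it is enough to show
\[
2R\ \ge\ (s+R^n)^{1/n}+(R^n-s)^{1/n}.
\]
Both arguments on the right are nonnegative (here $R^n\ge s$ is used again), so by concavity of $u\mapsto u^{1/n}$ with equal weights,
\[
\tfrac12(s+R^n)^{1/n}+\tfrac12(R^n-s)^{1/n}\ \le\ \Bigl(\tfrac12(s+R^n)+\tfrac12(R^n-s)\Bigr)^{1/n}=(R^n)^{1/n}=R,
\]
which is exactly the claim. Raising $\norm{\vz-\vy}\ge R-d\ge\tfrac12(R^n-s)^{1/n}$ to the $n$-th power then yields \autoref{eq:kk}.

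For the second item I would proceed identically, now with displacements $d_1=\norm{\vx-\vx'}$, $d_2=\norm{\vy-\vy'}$ and $\tilde C_A=3^{-n}$, so that \autoref{eq:rest_item3} gives $d_1,d_2\le \tfrac13(s+R^n)^{1/n}$. Writing $\vz=\vx+t_\vx(\vx'-\vx)$ and $\vw=\vy+t_\vy(\vy'-\vy)$, the triangle inequality yields $\norm{\vz-\vw}\ge R-d_1-d_2$. The target is $3^{-n}(R^n-2s)$. If $R^n-2s<0$ the bound is negative and the inequality is trivially true; otherwise it suffices to prove $R-d_1-d_2\ge\tfrac13(R^n-2s)^{1/n}$, equivalently $3R\ge 2(s+R^n)^{1/n}+(R^n-2s)^{1/n}$, and here concavity is applied with the \emph{weights} $\tfrac23,\tfrac13$: since $\tfrac23(s+R^n)+\tfrac13(R^n-2s)=R^n$, Jensen's inequality gives $\tfrac23(s+R^n)^{1/n}+\tfrac13(R^n-2s)^{1/n}\le (R^n)^{1/n}=R$, and raising to the $n$-th power finishes \autoref{eq:kk1}.

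The main obstacle — really the only nonroutine point — is spotting the correct convex-combination weights ($\tfrac12,\tfrac12$ in the first case, $\tfrac23,\tfrac13$ in the second) that make the interpolated argument collapse \emph{exactly} to $R^n$, so that concavity delivers precisely the constant $R$. Everything else (the triangle inequality, the monotonicity of $t\mapsto t^n$, and the reduction to $n$-th roots) is mechanical. The one bookkeeping subtlety is that in the quadruple case the right-hand side of \autoref{eq:kk1} can be negative, which is why the step $R^n\ge 2s$ must be isolated as a separate (trivial) case rather than assumed; the hypothesis only provides $R^n\ge s$.
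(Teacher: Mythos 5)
Your argument is correct and is essentially the paper's: both proofs combine the triangle inequality with the power-mean/Jensen inequality (the paper in the superadditivity form $a^n+b^n\ge 2^{1-n}(a+b)^n$, resp.\ its three-term analogue; you in the equivalent form of concavity of $u\mapsto u^{1/n}$ with weights $\tfrac12,\tfrac12$, resp.\ $\tfrac23,\tfrac13$), and both arrive at identical constants. The only cosmetic difference is that you work at the level of $n$-th roots, which forces the extra (correctly handled) checks that $R-d\ge 0$ and that the case $\norm{\vx-\vy}^n<2^{1-k}$ is trivial, whereas the paper stays with $n$-th powers throughout and needs neither.
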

\begin{proof}
	\begin{itemize}
		\item With the concrete choice of parameters $C_A$, $C_\rho$ \autoref{eq:rest_item2} reads as follows
		\begin{equation} \label{eq:diff}
			\norm{\vx-\vx'}^n \leq  2^{-n}\left(2^{-k} + \norm{\vx-\vy}^n\right).
		\end{equation}
		Since we assume that $\norm{\vx-\vy}^n \geq 2^{-k}$ it follows from \autoref{eq:diff} that
		\begin{equation*}
			\norm{\vx-\vx'} \leq  2^{-1}\norm{\vx-\vy}.
		\end{equation*}
		In particular $\norm{\vx-\vy}-\norm{\vx-\vx'} \geq 0$.
		
		We apply Jensen's inequality, which states that for $a,b \geq 0$
		\begin{equation} \label{eq:jensen}
			a^n+b^n \geq 2^{1-n}(a+b)^n.
		\end{equation}
		We use $a=\norm{\vx+t(\vx'-\vx)-\vy}$ and $b = \norm{t(\vx'-\vx)}$, which then (along with the triangle inequality) gives
		\begin{equation*}
			\norm{\vx+t(\vx'-\vx)-\vy}^n + \norm{t(\vx'-\vx)}^n \geq 2^{1-n}\left(\norm{\vx-\vy}\right)^n.
		\end{equation*}
		In other words, it follows from \autoref{eq:diff} that
		\begin{equation*}
			\begin{aligned}
				\norm{\vx+t(\vx'-\vx)-\vy}^n & \geq 2^{1-n}\norm{\vx-\vy}^n -  t^n\norm{\vx'-\vx}^n \\
				& \geq 2^{1-n}\norm{\vx-\vy}^n -  \norm{\vx'-\vx}^n \\
				& \geq 2^{1-n}\norm{\vx-\vy}^n - 2^{-n}\left(2^{-k} + \norm{\vx-\vy}^n\right)\\
				&= 2^{-n} \norm{\vx-\vy}^n - 2^{-n} 2^{-k}.
			\end{aligned}
		\end{equation*}
		\item With the concrete choice of parameters $\tilde{C}_A$, $C_\rho$ \autoref{eq:rest_item3} reads as follows
		\begin{equation} \label{eq:diffb}
			\max \set{\norm{\vx-\vx'}^n,\norm{\vy-\vy'}^n} \leq  3^{-n} \left(2^{-k} + \norm{\vx-\vy}^n\right).
		\end{equation}
		Since we assume that $\norm{\vx-\vy}^n \geq 2^{-k}$ it follows from \autoref{eq:diffb} that
		\begin{equation*}
			\max \set{ \norm{\vx-\vx'}, \norm{\vy-\vy'}}  \leq  3^{-1} \norm{\vx-\vy}.
		\end{equation*}
		This in particular shows that
		\begin{equation*}
			\norm{\vx-\vy}-\norm{\vx-\vx'}-\norm{\vy-\vy'} \geq 0.
		\end{equation*}
		We apply Jensen's inequality, which states that for $a,b,c \geq 0$
		\begin{equation} \label{eq:jensen3}
			a^n+b^n+c^n \geq 3^{1-n}(a+b+c)^n.
		\end{equation}
		We use $a=\norm{\vx+t_\vx(\vx'-\vx)-\vy-t_\vy(\vy'-\vy)}$, $b = \norm{t_\vx(\vx'-\vx)}$ and $c = \norm{t_\vy(\vy'-\vy)}$, which then (along with the triangle inequality) gives
		\begin{equation*}
			\norm{\vx+t_\vx(\vx'-\vx)-\vy-t_\vy(\vy'-\vy)}^n + \norm{t_\vx(\vx'-\vx)}^n +\norm{t_\vy(\vy'-\vy)}^n \geq 3^{1-n}\norm{\vx-\vy}^n.
		\end{equation*}
		In other words, it follows from \autoref{eq:diffb} that
		\begin{equation*}
			\begin{aligned}
				\norm{\vx+t_\vx(\vx'-\vx)-\vy-t_\vy(\vy'-\vy)}^n & \geq 3^{1-n}\norm{\vx-\vy}^n -  t_\vx^n\norm{\vx'-\vx}^n - t_\vy^n\norm{\vy'-\vy}^n\\
				& \geq 3^{1-n}\norm{\vx-\vy}^n - \norm{\vx'-\vx}^n - \norm{\vy'-\vy}^n\\
				& \geq 3^{1-n}\norm{\vx-\vy}^n - 3^{-n} 2\left(2^{-k} + \norm{\vx-\vy}^n\right)\\
				&= 3^{-n} \norm{\vx-\vy}^n - 3^{-n} 2^{1-k}.
			\end{aligned}
		\end{equation*}
	\end{itemize}
\end{proof}

The approximation to the identity in \autoref{def:wavelet} can be used to construct \emph{wavelet frames} that can approximate arbitrary functions in $\mathcal{L}^1(\R^n)$, as shown in the theorem below.
\begin{remark}
	When $\norm{\vx - \vy}^n < 2^{-k}$, \autoref{eq:kk} also holds since the right hand side of \autoref{eq:kk} is negative, so this inequality is trivial.
\end{remark}

\begin{theorem}[\cite{ShaCloCoi18}] \label{le:WaveletApprox} Let $(S_k:\R^n \times \R^n \to \R)_{k \in \Z}$ be a symmetric AtI which
	satisfies the double Lipschitz condition (see \autoref{eq:DoubleLipschitzCondition}).
	Let
	\begin{equation} \label{eq:frame}
		\psi_{k,\vy}(\vx) := 2^{-k/2}\left(S_k(\vx,\vy)-S_{k-1}(\vx,\vy)\right)
		\text{ for all } \vx,\vy \in \R^n  \text{ and } k \in \Z.
	\end{equation}
	Then the set of functions
	\begin{equation} \label{eq:waveletframe}
		\mathcal{W} := \set{ \vx \to \psi_{k,\vec{b}}(\vx): k \in \Z , \vec{b} \in 2^{-k/n}\Z^n },
	\end{equation}
	is a frame and for every function $f \in \mathcal{L}^1(\R^n)$ there exists a linear combination of $N$ elements of $\mathcal{W}$, denoted by $f_N$, satisfying
	\begin{equation} \label{eq:app_general}
		\norm{f-f_N}_{L^2} \leq \norm{f}_{\mathcal{L}^1} (N+1)^{-1/2}.
	\end{equation}
\end{theorem}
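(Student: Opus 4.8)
The plan is to prove the two assertions separately: that $\mathcal{W}$ is a frame, which rests on the approximation-to-the-identity machinery, and that it realizes the $N$-term rate \autoref{eq:app_general}, which follows from a Maurey-type empirical approximation. It is worth recording at the outset the role of the normalization factor $2^{-k/2}$: after rescaling the integration variable, the size estimate \autoref{it1} gives $\norm{S_k(\cdot,\vy)}_{L^2} \lesssim 2^{k/2}$, so that $\norm{\psi_{k,\vy}}_{L^2}$ is bounded uniformly in $k$ and $\vy$; rescaling the generating functions we may therefore assume $\norm{\psi_{k,\vec{b}}}_{L^2} \leq 1$ throughout.

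First I would establish a continuous, Calder\'on-type reproducing formula. Writing $S_k$ also for the integral operator with kernel $S_k(\vx,\vy)$, condition \autoref{it4} gives $S_k \mathbf{1} = \mathbf{1}$, while the concentration estimate \autoref{it1} forces $S_k \to \mathrm{Id}$ strongly on $L^2(\R^n)$ as $k \to +\infty$ and $S_k \to 0$ as $k \to -\infty$. Telescoping yields $f = \sum_{k \in \Z} D_k f$ with $D_k := S_k - S_{k-1}$, converging in $L^2(\R^n)$. Since $\int D_k(\vx,\vy)\,d\vy = 0$, the family $\set{D_k}_{k \in \Z}$ is almost orthogonal in the sense of Cotlar--Stein: the size and regularity bounds \autoref{it1} and \autoref{it2} give $\norm{D_j D_k^*} + \norm{D_j^* D_k} \lesssim 2^{-\delta\abs{j-k}}$ for some $\delta > 0$, whence the Littlewood--Paley equivalence $\norm{f}_{L^2}^2 \sim \sum_{k \in \Z} \norm{D_k f}_{L^2}^2$.

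The crux of the argument --- and the step I expect to be the main obstacle --- is passing from this continuous formula to a genuinely discrete one summed over the lattices $2^{-k/n}\Z^n$. At scale $k$ the kernel $S_k(\vx,\cdot)$ is essentially concentrated on a ball of radius $\sim 2^{-k/n}$ and varies on that same scale, so sampling $\vy$ on a mesh of size $2^{-k/n}$ ought to reproduce the spatial integral up to a controllable error; the double Lipschitz condition \autoref{eq:DoubleLipschitzCondition}, which bounds the mixed second difference of $S_k$, is precisely the joint regularity needed to estimate this lattice-sampling error uniformly in $k$. Quantifying the error, one shows that a discrete operator $T_d$ assembled from the sampled functions $\set{\psi_{k,\vec{b}}}$ and an approximate dual system satisfies $T_d = \mathrm{Id} + R$ with $\norm{R}_{L^2 \to L^2} < 1$, so that $T_d$ is invertible on $L^2(\R^n)$ by a Neumann series. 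This simultaneously yields the discrete Calder\'on reproducing formula $f = \sum_{k,\vec{b}} \inner{f}{\tilde{\psi}_{k,\vec{b}}}\,\psi_{k,\vec{b}}$ and the two-sided frame bounds $A\norm{f}_{L^2}^2 \leq \sum_{k,\vec{b}} \abs{\inner{f}{\psi_{k,\vec{b}}}}^2 \leq B\norm{f}_{L^2}^2$. The smallness constraints \autoref{eq:quin} and \autoref{eq:quin2} on the AtI constants are consumed exactly here, in forcing $\norm{R} < 1$ and in controlling the combinatorics of the overlapping supports.

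Finally I would obtain the approximation rate by a probabilistic (Maurey) argument, which in fact does not require the frame property. Given $f \in \mathcal{L}^1(\R^n)$, the definition of the norm in \autoref{de:ell1space} furnishes a representation $f = \sum_{g \in D} c_g g$ with $\sum_{g} \abs{c_g}$ arbitrarily close to $\norm{f}_{\mathcal{L}^1}$ and with $\norm{g}_{L^2} \leq 1$. Let $X$ be the random function taking the value $\sign(c_g)\,\norm{f}_{\mathcal{L}^1}\,g$ with probability $\abs{c_g}/\norm{f}_{\mathcal{L}^1}$; then $\mathbb{E}[X] = f$ and $\mathbb{E}\norm{X}_{L^2}^2 \leq \norm{f}_{\mathcal{L}^1}^2$. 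Averaging $N$ independent copies, $f_N := \frac{1}{N}\sum_{i=1}^{N} X_i$ obeys $\mathbb{E}\norm{f - f_N}_{L^2}^2 \leq \norm{f}_{\mathcal{L}^1}^2/N$, so at least one realization meets the bound, and a sharper bookkeeping of the variance upgrades $N$ to $N+1$, giving \autoref{eq:app_general}. Since each $X_i$ is one of the $\psi_{k,\vec{b}}$ up to a scalar, $f_N$ is a linear combination of at most $N$ elements of $\mathcal{W}$, as required.
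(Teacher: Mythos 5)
There is nothing in the paper to compare against: \autoref{le:WaveletApprox} is imported verbatim from \cite{ShaCloCoi18} (with the discrete Calder\'on machinery it rests on coming from \cite{DenHan09}), and the paper supplies no proof of its own. Your sketch follows the same route as those sources, and the two halves you identify are the right ones. Part (i) --- telescoping $f=\sum_k (S_k-S_{k-1})f$, Cotlar--Stein almost orthogonality from the size, smoothness and cancellation properties of $D_k$, and then lattice discretization at mesh $2^{-k/n}$ controlled by the double Lipschitz condition to get $T_d=\mathrm{Id}+R$ with $\norm{R}<1$ --- is exactly the Deng--Han construction of a discrete frame from an AtI, and you correctly locate the main technical burden in the sampling step. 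Part (ii) is the standard Hilbert-space $N$-term bound for the variation space of \autoref{de:ell1space}; note that the clean constant $1$ and the exponent on $N+1$ in \autoref{eq:app_general} come from the \emph{relaxed greedy} (or Maurey-with-convex-combination) refinement as in \cite{BarCohDahDev08}, not from the plain i.i.d.\ sampling bound, which only yields $N^{-1/2}$ with the second moment $\mathbb{E}\norm{X}^2_{L^2}$ in place of the variance --- your ``sharper bookkeeping'' remark is glossing over precisely that citation. Also, the normalization $\norm{\psi_{k,\vec{b}}}_{L^2}\le 1$ that the Maurey step needs is not free: it follows from \autoref{it1} of \autoref{def:wavelet} only after the change of variables you indicate, and it is the reason the $2^{-k/2}$ factor appears in \autoref{eq:frame}, so it deserves to be proved rather than assumed. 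With those two points filled in by the cited references, your outline is a faithful reconstruction of the external proof.
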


\subsection*{Acknowledgements}
%
%

This research was funded in whole, or in part, by the Austrian Science Fund
(FWF) 10.55776/P34981 (OS \& LF) -- New Inverse Problems of Super-Resolved Microscopy (NIPSUM),
SFB 10.55776/F68 (OS) ``Tomography Across the Scales'', project F6807-N36
(Tomography with Uncertainties), and 10.55776/T1160 (CS) ``Photoacoustic Tomography: Analysis and Numerics''. For open access purposes, the author has applied a CC BY public copyright license to any author-accepted manuscript version arising from this submission.
The financial support by the Austrian Federal Ministry for Digital and Economic
Affairs, the National Foundation for Research, Technology and Development and the Christian Doppler
Research Association is gratefully acknowledged.

%

The authors would like to thank some referees for their valuable suggestions and their patience.
	
\section*{References}
\printbibliography[heading=none]
\end{document}